\newtheorem{theorem}{Theorem}[section]
\theoremstyle{definition}
\newtheorem{example}[theorem]{Example}
\newtheorem{corollary}[theorem]{Corollary}
\theoremstyle{remark}
\newtheorem{remark}[theorem]{Remark}
\numberwithin{equation}{section}
\begin{document}

\title{Extensions of  S-Lemma for Noncommutative Polynomials}


	\author{Feng Guo}
	\address{School of Mathematical Sciences, Dalian  University of Technology,
Dalian, 116024, China}
	\curraddr{}
	\email{fguo@dlut.edu.cn}
	\thanks{}
	
	\author{Sizhuo Yan and  Lihong Zhi}
	\address{Key Lab of Mathematics Mechanization, AMSS, University of Chinese Academy of Sciences, Beijing 100190,
        China}
	\curraddr{}
	\email{yansizhuo@amss.ac.cn, lzhi@mmrc.iss.ac.cn}
	\thanks{This research  is supported by the National Key Research Project of China
2018YFA0306702 (Zhi) and the National Natural Science Foundation of China 12071467 (Zhi).}


\subjclass[2020]{90C20, 47A56, 46L07, 90C22, 14P10, 47A68}

\keywords{S-lemma, noncommutative polynomials, positive semidefinite matrix, completely positive linear map}
\date{}

\dedicatory{}

\begin{abstract}
We consider the problem of extending the classical S-lemma from
commutative case to noncommutative cases.  We show that a symmetric
quadratic homogeneous matrix-valued polynomial is positive semidefinite  if and only if its coefficient matrix is positive semidefinite. Then we extend the S-lemma to three kinds of noncommutative polynomials: noncommutative polynomials whose coefficients  are real numbers, matrix-valued noncommutative polynomials and hereditary polynomials. Some examples are given to demonstrate the relations between these new derived conditions.
\end{abstract}

\maketitle

\section{introduction}
	The classical S-lemma for commutative polynomials
	answers the question that when one quadratic inequality is a
	consequence of some other quadratic inequalities \cite{PITT2007}. Thus, it is a
	special form of Positivstellensatz from real algebraic geometry which
	characterizes polynomials that are positive (nonnegative) on a
	semialgebraic set \cite{Bochnak2003}. There are many important
	results of Positivstellensatz for  {\itshape noncommutative}
	cases. Helton proved a remarkable result that positive
	noncommutative polynomials are sums of squares~\cite{H2002}.
	Helton and McCullough presented a   noncommutative
	Postivestellensatz~\cite{HMS2004}.   Helton,  Klep, and McCullough
	gave a linear Positivestellensatz for characterizing the matricial
	linear matrix  inequality (LMI) domination
	problems~\cite{HKIMS2010}. Their result was    generalized by
	Zalar to solve the linear operator inequality (LOI) domination
	problems~\cite{Zar2017}.  When the domain is
	convex~\cite{HMS2004convex}, Helton,  Klep, and McCullough
	established a perfect noncommutative Nichtnegativstellensatz
	in~\cite{JIS2012}.  Furthermore, they   studied the matrix convex
	hulls of free semialgebraic set in~\cite{HKM2016}. Our goal in
	this paper is to investigate how to extend S-lemma to noncommutative cases.

   To state the main contributions of this paper, we need
   the following notations.
   The symbol $\mathbb{R}$ (resp. $\mathbb{N}$,
   $\mathbb{N}^+$) denotes the set of real (resp.  natural, positive
   natural) numbers.
For $n\in \mathbb{N}^{+}$, $\mathbb{R}^{n \times n}$ (resp.  $\mathbb{SR}^{n}$) stands
for set of $n\times n$ real matrices (resp. symmetric matrices).
 For $m, n\in \mathbb{N}^{+}$, the symbol $(\mathbb{R}^{n \times n})^{m}$ (resp.
$(\mathbb{SR}^{n})^m$ denotes the vector space consisting of
$m$-dimensional vectors of $n\times n$ real matrices (resp. symmetric
matrices).   The
   symbols  
   $\phi , \psi$ are used to represent the linear maps
   between finite dimensional  Euclidean spaces.

    The main results of the paper are stated below.
   We start with the simplest case where the coefficients of noncommutative polynomials are real numbers.

\begin{theorem}\label{thm3.1}
		Let
\[f=\sum_{i=1,j=1}^{m} a_{ij} x_i x_j, ~~g=\sum_{i=1,j=1}^{m} b_{ij} x_i x_j,\]
		be homogeneous quadratic symmetric noncommutative polynomials, where $a_{i,j}, b_{i,j}\in\mathbb{R}$ and $a_{ij}=a_{ji}$,
$b_{ij}=b_{ji}$ for all $i, j$.
		Suppose that there  is an $\hat{X} \in
		(\mathbb{SR}^{\hat{n}})^m$ for some $\hat{n} \in
		\mathbb{N}^{+}$,  such that $g(\hat{X}) \succ 0$.
		Then the following three statements are equivalent:
		\begin{enumerate}[\upshape (1)]
			\item For all $ X\in \mathbb{R}^m$, if $g(X) \ge 0$, then $f(X) \ge 0$.
			\item For all $ X\in (\mathbb{SR}^n)^m$,  $n \in
				\mathbb{N}^{+}$, if $g(X) \succeq 0$, then $f(X) \succeq 0$.
			\item There is a nonnegative real number $\lambda $ such
				that $f(X)-\lambda g(X) \succeq 0$ for all $ X\in
				(\mathbb{SR}^{n})^m$, $n \in \mathbb{N}^{+}$.
		\end{enumerate}

\end{theorem}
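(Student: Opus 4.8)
The plan is to reduce all three statements to the classical (commutative) S-lemma by means of a ``coefficient matrix'' description of homogeneous quadratic noncommutative polynomials. Write $A=(a_{ij})\in\mathbb{SR}^m$ and $B=(b_{ij})\in\mathbb{SR}^m$ for the (symmetric) coefficient matrices of $f$ and $g$, and for any symmetric $C=(c_{ij})\in\mathbb{SR}^m$ set $h_C:=\sum_{i,j}c_{ij}x_ix_j$, so that $f=h_A$, $g=h_B$ and $f-\lambda g=h_{A-\lambda B}$. The key step is the following ``matrix-valued S-lemma'' ingredient (the statement announced in the abstract, specialized to real coefficients): for symmetric $C$, one has $h_C(X)\succeq0$ for every $n\in\mathbb{N}^+$ and every $X=(X_1,\dots,X_m)\in(\mathbb{SR}^n)^m$ \emph{if and only if} $C\succeq0$. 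For the ``if'' direction, diagonalize $C=\sum_k\nu_ku_ku_k^\top$ with $\nu_k\ge0$; then $h_C(X)=\sum_k\nu_kS_k^2$ where $S_k:=\sum_i(u_k)_iX_i\in\mathbb{SR}^n$, and each $\nu_kS_k^2=\nu_kS_k^\top S_k\succeq0$. For the ``only if'' direction, take $n=1$, where $h_C(x)=x^\top Cx\ge0$ for all $x\in\mathbb{R}^m$ forces $C\succeq0$. (One uses $c_{ij}=c_{ji}$ throughout to see that $h_C(X)$ is symmetric, so that $\succeq0$ makes sense.)

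Granting this, $(3)\Rightarrow(2)$ is immediate: if $\lambda\ge0$ and $f(X)-\lambda g(X)\succeq0$, then $g(X)\succeq0$ gives $f(X)=\bigl(f(X)-\lambda g(X)\bigr)+\lambda g(X)\succeq0$. And $(2)\Rightarrow(1)$ is the special case $n=1$, since then $(\mathbb{SR}^1)^m=\mathbb{R}^m$ and $\succeq$ is just $\ge$.

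The content is $(1)\Rightarrow(3)$. First I transfer the nondegeneracy hypothesis to the scalar level. If $B\preceq0$ then $-B\succeq0$, so by the key step $-g(Y)=h_{-B}(Y)\succeq0$ for every symmetric tuple $Y$; in particular $g(\hat X)\preceq0$, contradicting $g(\hat X)\succ0$. Hence $B\not\preceq0$, i.e.\ there is $\bar x\in\mathbb{R}^m$ with $g(\bar x)=\bar x^\top B\bar x>0$. Now regard $f,g$ as ordinary quadratic forms on $\mathbb{R}^m$: statement $(1)$ says $g(x)\ge0\Rightarrow f(x)\ge0$, and $g(\bar x)>0$ is precisely the Slater condition, so the classical S-lemma produces $\lambda\ge0$ with $f(x)-\lambda g(x)=x^\top(A-\lambda B)x\ge0$ for all $x\in\mathbb{R}^m$, that is, $A-\lambda B\succeq0$. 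Applying the key step once more with $C=A-\lambda B$ gives $f(X)-\lambda g(X)=h_{A-\lambda B}(X)\succeq0$ for every $n$ and every $X\in(\mathbb{SR}^n)^m$, which is $(3)$.

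The only external input is the classical commutative S-lemma; everything else is bookkeeping with the coefficient matrices. The one spot requiring care is the transfer of the Slater condition: a single matrix tuple at which $g$ is positive definite does not obviously decompose into a real point at which $g$ is positive, but the sign argument above (comparing $B\preceq0$ against $g(\hat X)\succ0$) bypasses this cleanly. If anything is a genuine obstacle, it is formulating the ``coefficient matrix'' characterization in the generality used elsewhere in the paper; in the present scalar-coefficient setting it reduces to the short sum-of-squares argument sketched above.
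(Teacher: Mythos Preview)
Your proof is correct and follows essentially the same route as the paper: reduce the Slater hypothesis to the scalar level by a ``$B\preceq 0$ contradiction'' argument, invoke the classical S-lemma to obtain $A-\lambda B\succeq 0$, and then lift back to arbitrary symmetric matrix tuples via the equivalence ``$h_C(X)\succeq 0$ for all $X$ $\iff$ $C\succeq 0$.'' The only cosmetic difference is that the paper writes this last equivalence using the tensor form $h_C(X)=X^T(C\otimes\mathrm{Id}_n)X$, while you prove it by the spectral decomposition/sum-of-squares formula $h_C(X)=\sum_k\nu_kS_k^2$; both are one-line arguments.
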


The  main part of the paper is devoted to extend  the S-lemma for  noncommutative polynomials with matrix coefficients, i.e. matrix-valued polynomials.
Let
$f(x)=\sum_{i=1,j=1}^{m} A_{ij} x_i x_j$  be a homogeneous quadratic symmetric matrix-valued polynomial,  where
$A_{ij}=A_{ji}^{T}, A_{ij}  \in \mathbb{R}^{q \times q}$ for all $i, j$. We show first in Theorem \ref{thm4.1} that
$f(X)=\sum_{i=1,j=1}^{m} A_{ij}\otimes X_i X_j \succeq 0 $ for all $X\in (\mathbb{SR}^{n })^{m}$ and $n\in\mathbb{N}^{+}$
if and only if its coefficient matrix $\mathcal A=(A_{ij}) \in \mathbb{SR}^{m q}$ is positive semidefinite.

{ For $n\in\mathbb{N}^+$, }let $\mathbbm{1}_n$ represent the identity map from $\mathbb{R}^{n
\times n}$ to $\mathbb{R}^{n \times n}$.
Inspired by Choi's characterization of a
completely positive map via a positive semidefinite Choi matrix
(Theorem \ref{thm2.2}), we generalize
the condition of existing a nonnegative number $\lambda$ such that
$f(X)-\lambda g(X) \geq 0$ for all $X\in\mathbb{R}^m$ to the existence
of a completely positive linear mapping $\phi:\mathbb{R}^{q \times
 	q} \rightarrow \mathbb{R}^{q \times q}$ such that $f(X)-(\phi \otimes
\mathbbm{1}_n) g(X) \succeq 0$ for all $ X\in (\mathbb{SR}^{n})^m,  n
\in \mathbb{N}^+$.

\begin{theorem} \label{thm1.1}
Let
\[f(x)=\sum_{i=1,j=1}^{m} A_{ij} x_i x_j, ~~
	g(x)=\sum_{i=1,j=1}^{m} B_{ij} x_i x_j,\]
 be homogeneous quadratic symmetric matrix-valued polynomials,  where
 $A_{ij}, B_{ij} \in \mathbb{R}^{q \times q}$ and $A_{ij}=A_{ji}^{T}$,
 $B_{ij}=B_{ji}^{T}$ for all $i, j$. Suppose that there is an $\hat{X}
 \in (\mathbb{SR}^{\hat{n}})^m$ for some $\hat{n} \in \mathbb{N}^{+}$,
 such that $g(\hat{X}) \succ 0$.  Then the following two statements
 are equivalent:
		\begin{enumerate}[\upshape (1)]
		\item For all $X\in (\mathbb{SR}^{n})^m$, $n>q$, if $({\rm Id}_{q} \otimes
			{P})g(X)({\rm Id}_{q} \otimes {P}) \succeq 0$, then $({\rm
			Id}_{q} \otimes {P})f(X)({\rm Id}_{q} \otimes
			{P}) \succeq 0$, where ${P}: \mathbb{R}^n \to
			\mathbb{R}^q$ is the projection to the last $q$
			coordinates.
			
			\item There is a nonzero completely positive linear
				mapping $\phi:\mathbb{R}^{q \times q} \rightarrow
				\mathbb{R}^{q \times q}$ such that $f(X)-(\phi \otimes
				\mathbbm{1}_n) g(X) \succeq 0$ for all $ X\in
				(\mathbb{SR}^{n})^m,  n \in \mathbb{N}^+$.
		\end{enumerate}
	\end{theorem}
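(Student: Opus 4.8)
The plan is to establish the two implications separately; $(2)\Rightarrow(1)$ is routine, while $(1)\Rightarrow(2)$ is the substantial direction and is handled by passing to coefficient matrices and running a separation argument. For $(2)\Rightarrow(1)$, suppose $\phi$ is a nonzero completely positive map with $f(X)-(\phi\otimes\mathbbm{1}_n)g(X)\succeq 0$ for all tuples $X$, and fix $X\in(\mathbb{SR}^{n})^m$ with $n>q$ such that $({\rm Id}_q\otimes P)g(X)({\rm Id}_q\otimes P)\succeq 0$. Since compressing the second tensor factor by $P$ commutes with applying $\phi$ to the first tensor factor,
\[
({\rm Id}_q\otimes P)(\phi\otimes\mathbbm{1}_n)g(X)({\rm Id}_q\otimes P)=(\phi\otimes\mathbbm{1})\bigl(({\rm Id}_q\otimes P)g(X)({\rm Id}_q\otimes P)\bigr),
\]
and the right-hand side is positive semidefinite because $\phi$, being completely positive, stays positive after tensoring with an identity map. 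As a compression of a positive semidefinite matrix is again positive semidefinite, the identity
\[
({\rm Id}_q\otimes P)f(X)({\rm Id}_q\otimes P)=({\rm Id}_q\otimes P)\bigl(f(X)-(\phi\otimes\mathbbm{1}_n)g(X)\bigr)({\rm Id}_q\otimes P)+({\rm Id}_q\otimes P)(\phi\otimes\mathbbm{1}_n)g(X)({\rm Id}_q\otimes P)
\]
writes the left side as a sum of two positive semidefinite matrices. (This direction uses neither $n>q$ nor the Slater point.)

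For $(1)\Rightarrow(2)$ I would argue by contraposition after recasting $(2)$. For a completely positive $\phi$, the polynomial $\sum_{i,j}\bigl(A_{ij}-\phi(B_{ij})\bigr)x_ix_j$, whose value at $X\in(\mathbb{SR}^{n})^m$ is $f(X)-(\phi\otimes\mathbbm{1}_n)g(X)$, has coefficient matrix $\mathcal A-({\rm Id}_m\otimes\phi)(\mathcal B)$, where $\mathcal A=(A_{ij})$, $\mathcal B=(B_{ij})\in\mathbb{SR}^{mq}$ and ${\rm Id}_m\otimes\phi$ acts on block matrices blockwise. By Theorem \ref{thm4.1}, statement $(2)$ is equivalent to the existence of a nonzero completely positive $\phi$ with $\mathcal A-({\rm Id}_m\otimes\phi)(\mathcal B)\succeq 0$, and by Choi's theorem (Theorem \ref{thm2.2}) such $\phi$ are parametrized by their positive semidefinite Choi matrices, so this is feasibility of a linear matrix inequality over the positive semidefinite cone. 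If $(2)$ fails, I would separate $\mathcal A$ from the convex cone $\mathcal C=\{({\rm Id}_m\otimes\phi)(\mathcal B)+S:\ \phi\ \text{completely positive},\ S\succeq 0\}$; the Slater hypothesis $g(\hat X)\succ 0$ should be precisely what forces $\mathcal C$ to be closed (so that a separating functional exists) and what controls the borderline case in which only $\phi=0$ is admissible. Separation produces a nonzero $Y\succeq 0$ in $\mathbb{SR}^{mq}$ with $\langle Y,\mathcal A\rangle<0$ and $\langle Y,({\rm Id}_m\otimes\phi)(\mathcal B)\rangle\ge 0$ for every completely positive $\phi$; since the adjoint of a completely positive map is completely positive, the latter condition is $\langle({\rm Id}_m\otimes\psi)(Y),\mathcal B\rangle\ge 0$ for all completely positive $\psi$.

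The final step, which I expect to be the main obstacle, is to convert $Y$ into a tuple violating $(1)$. Writing $Y=\sum_k v_kv_k^{T}$ with $v_k=(v_k^{(1)},\dots,v_k^{(m)})$, $v_k^{(i)}\in\mathbb{R}^q$, I would adapt the Gram-matrix realization from the proof of Theorem \ref{thm4.1}, but now split $\mathbb{R}^n$ (with $n>q$) into an $(n-q)$-dimensional block that encodes the data of $Y$ and the last $q$ coordinates, which play the role of the ancilla in Choi's construction. For the tuple $X$ so obtained, $({\rm Id}_q\otimes P)f(X)({\rm Id}_q\otimes P)$ realizes the value $\langle Y,\mathcal A\rangle<0$ on an appropriate vector and hence fails to be positive semidefinite, whereas $({\rm Id}_q\otimes P)g(X)({\rm Id}_q\otimes P)$ is positive semidefinite exactly because $\langle({\rm Id}_m\otimes\psi)(Y),\mathcal B\rangle\ge 0$ for all completely positive $\psi$ --- this last equivalence being Choi's theorem used a second time, which is also why the projection is onto exactly $q$ coordinates and $n>q$ is imposed. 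Verifying this correspondence, together with the closedness of $\mathcal C$ and the precise role of the Slater assumption in dealing with the nonzero requirement on $\phi$, is the delicate heart of the proof.
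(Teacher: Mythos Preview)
Your proposal is correct and follows the paper's approach essentially step for step: the reduction to coefficient matrices via Theorem~\ref{thm4.1}, a separation argument yielding $M^s\succeq 0$ with $\langle\mathcal A,M^s\rangle<0$ and (via Choi's theorem, as you anticipate) $\sum_{i,j}B_{ij}\otimes M^s_{ij}\succeq 0$, and the construction of the violating tuple $X^M\in(\mathbb{SR}^{r+q})^m$ from the rank-one decomposition $M^s=\sum_k v_kv_k^T$ so that $P X^M_i X^M_j P=M^s_{ij}$. The only technical difference is that the paper does not prove your cone $\mathcal C$ is closed; instead it shows the separation distance is strictly positive by restricting to Choi matrices of unit norm (a compact set) and using the Slater point to force $(\mathbbm{1}_m\otimes\phi)\mathcal B$ to have a positive eigenvalue, exactly as in the classical proof of Theorem~\ref{s-lemmacom}.
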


The following theorem is for a special case of matrix-valued hereditary polynomials. 
 Its proof can be adjusted  from the proof of  Theorem \ref{thm1.1}.

\begin{theorem} \label{thm1.3}
		Let 
\[f(x)=\sum_{i=1,j=1}^{m}A_{ij} x_i x_j^T, ~~ g(x) =\sum_{i=1,j=1}^{m}B_{ij} x_i x_j^T, \]
be homogeneous matrix-valued hereditary polynomials, where $A_{ij}, B_{ij} \in
\mathbb{R}^{q \times q}$ and $A_{ij}=A_{ji}^{T}$, $B_{ij}=B_{ji}^{T}$
for all $i, j$. Suppose that there is an $\hat{X} \in
(\mathbb{R}^{\hat{n} \times \hat{n}})^m$ for some $\hat{n} \in
\mathbb{N}^{+}$, such that $g(\hat{X}) \succ 0$. Then the following two statements are equivalent:
		\begin{enumerate}[\upshape (1)]
			\item For all $ X\in (\mathbb{R}^{n \times n})^m$,  $n \in
				\mathbb{N}^{+}$,   if $g(X) \succeq 0$, then $f(X) \succeq 0$.
			\item There is a nonzero completely positive linear
				mapping $\phi:\mathbb{R}^{q \times q} \rightarrow
				\mathbb{R}^{q \times q}$, such that $f(X)-(\phi
				\otimes  \mathbbm{1}_n) g(X) \succeq 0$ for all $X\in
				(\mathbb{R}^{n \times n})^m$, $ n \in \mathbb{N}^+$.
		\end{enumerate}
	\end{theorem}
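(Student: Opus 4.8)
The plan is to prove the two implications separately, treating $(2)\Rightarrow(1)$ as routine and concentrating the work on $(1)\Rightarrow(2)$. For $(2)\Rightarrow(1)$: suppose $\phi$ is a nonzero completely positive map with $f(X)-(\phi\otimes\mathbbm{1}_n)g(X)\succeq 0$ for every $n$ and every $X\in(\mathbb R^{n\times n})^m$, and let $X$ be any tuple with $g(X)\succeq 0$. Since $\phi$ is completely positive, the amplification $\phi\otimes\mathbbm{1}_n$ is a positive map, hence $(\phi\otimes\mathbbm{1}_n)g(X)\succeq 0$; then $f(X)$, being the sum of the two positive semidefinite matrices $f(X)-(\phi\otimes\mathbbm{1}_n)g(X)$ and $(\phi\otimes\mathbbm{1}_n)g(X)$, is positive semidefinite. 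The real content is $(1)\Rightarrow(2)$, which I would obtain by transcribing the separation argument from the proof of Theorem~\ref{thm1.1}, simplified by the extra positivity available in the hereditary setting.

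The first step of $(1)\Rightarrow(2)$ is to reformulate both conditions at the level of coefficient matrices. For $X=(X_1,\dots,X_m)\in(\mathbb R^{n\times n})^m$ set $M(X):=(X_iX_j^T)_{i,j=1}^m$, which equals $\mathbf X\mathbf X^T$ with $\mathbf X$ the $mn\times n$ matrix obtained by stacking $X_1,\dots,X_m$ vertically; in particular $M(X)\succeq 0$. Let $\mathcal T_{\mathcal A},\mathcal T_{\mathcal B}\colon\mathbb R^{m\times m}\to\mathbb R^{q\times q}$ be the linear maps determined by $\mathcal T_{\mathcal A}(E_{ij})=A_{ij}$ and $\mathcal T_{\mathcal B}(E_{ij})=B_{ij}$, so that their Choi matrices are precisely the coefficient matrices $\mathcal A=(A_{ij})$ and $\mathcal B=(B_{ij})$ in $\mathbb{SR}^{mq}$. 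Then $f(X)=(\mathcal T_{\mathcal A}\otimes\mathbbm{1}_n)M(X)$ and $g(X)=(\mathcal T_{\mathcal B}\otimes\mathbbm{1}_n)M(X)$, and since every positive semidefinite $M\in\mathbb{SR}^{mk}$ arises as $M(X)$ for a suitable tuple $X$ up to simultaneously padding all of its blocks with zero rows and columns (factor $M=\mathbf X\mathbf X^T$ and extend the blocks of $\mathbf X$ to square matrices), condition~(1) becomes the ``coefficient-level'' statement: for all $k$ and all $0\preceq M\in\mathbb{SR}^{mk}$, $(\mathcal T_{\mathcal B}\otimes\mathbbm{1}_k)M\succeq 0$ implies $(\mathcal T_{\mathcal A}\otimes\mathbbm{1}_k)M\succeq 0$. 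The same observation, together with Choi's Theorem~\ref{thm2.2} and the identity $f(X)-(\phi\otimes\mathbbm{1}_n)g(X)=\bigl((\mathcal T_{\mathcal A}-\phi\circ\mathcal T_{\mathcal B})\otimes\mathbbm{1}_n\bigr)M(X)$, turns condition~(2) into: there is a nonzero completely positive $\phi\colon\mathbb R^{q\times q}\to\mathbb R^{q\times q}$ such that $\mathcal T_{\mathcal A}-\phi\circ\mathcal T_{\mathcal B}$ is completely positive, equivalently such that the coefficient matrix $\mathcal A-(\phi(B_{ij}))_{i,j}$ of $f-(\phi\otimes\mathbbm{1}_n)g$ is positive semidefinite. (The hereditary analogue of Theorem~\ref{thm4.1} is the special case $\mathcal T_{\mathcal B}=0$ of this reformulation.)

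The heart of the argument is then a Hahn--Banach separation carried out in the fixed finite-dimensional spaces $\mathbb{SR}^{mq}$ and $\mathbb{SR}^{q^2}$, which parametrise via Choi matrices the cones of completely positive maps $\mathbb R^{m\times m}\to\mathbb R^{q\times q}$ and $\mathbb R^{q\times q}\to\mathbb R^{q\times q}$. Assuming (2) fails, so that $\mathcal T_{\mathcal A}-\phi\circ\mathcal T_{\mathcal B}$ is \emph{not} completely positive for any nonzero completely positive $\phi$, I would show that $\mathcal T_{\mathcal A}$ cannot be written as $\psi+\phi\circ\mathcal T_{\mathcal B}$ with $\psi,\phi$ completely positive and $\phi$ nonzero, separate $\mathcal T_{\mathcal A}$ off from that convex cone by a linear functional, and translate the functional back through the Choi correspondence into a pair $(k,M)$ with $M\succeq 0$, $(\mathcal T_{\mathcal B}\otimes\mathbbm{1}_k)M\succeq 0$ but $(\mathcal T_{\mathcal A}\otimes\mathbbm{1}_k)M\not\succeq 0$; realising $M$ as a padded $M(X)$ then produces a tuple $X$ with $g(X)\succeq 0$ and $f(X)\not\succeq 0$, contradicting~(1). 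The strict-feasibility hypothesis $g(\hat X)\succ 0$ enters exactly as the Slater condition does in the classical S-lemma: it makes the cone of moment matrices satisfying the $g$-constraint ``thick'', which forces the separating functional to acquire a genuine $\mathcal T_{\mathcal A}$-component after normalisation --- hence forces the extracted $\phi$ to be nonzero --- and it is also what lets one pass to the closure of the a priori non-closed cone and afterwards remove the closure by perturbing around $\hat X$.

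I expect the closedness and separation bookkeeping to be the main obstacle. The conic hull of the moment matrices $M(X)$ and of the completely positive certificates need not be closed, so a perturbation argument based on $g(\hat X)\succ 0$ --- or an explicit description of its extreme rays --- is needed before Hahn--Banach can be applied, and one must verify that the separating functional decomposes into precisely the completely positive pieces predicted by Choi's theorem and not into some strictly larger cone. Every other step is linear algebra or a direct transcription of the corresponding step in the proof of Theorem~\ref{thm1.1}; the only genuine simplification relative to that proof is that the hereditary moment matrix $(X_iX_j^T)$ is automatically positive semidefinite, which is why condition~(1) can be stated here without the coordinate projection $P$ that appears in Theorem~\ref{thm1.1}.
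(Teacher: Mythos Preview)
Your proposal is correct and takes essentially the same approach as the paper: both prove $(1)\Rightarrow(2)$ by contrapositive via convex separation in $\mathbb{SR}^{mq}$ between the PSD cone and $\{\mathcal A-(\mathbbm{1}_m\otimes\phi)\mathcal B:\phi\text{ CP}\}$, then realise the separating matrix $M^s\succeq 0$ as a zero-padded block matrix $(X_iX_j^{T})_{i,j}$ to produce the counterexample tuple $X$---exactly your ``every PSD $M$ arises as a padded $M(X)$'' observation, and exactly the reason no projection $P$ is needed in the hereditary case. One minor correction on the role of the Slater hypothesis: in the paper's argument no $\phi$ is ``extracted'' from the separating functional (the functional yields $M^s$ directly); instead $g(\hat X)\succ 0$ is used to show that $(\mathbbm{1}_m\otimes\phi^0)\mathcal B$ has a positive eigenvalue for every normalised CP $\phi^0$, which forces each ray $\{\mathcal A-\lambda(\mathbbm{1}_m\otimes\phi^0)\mathcal B:\lambda\ge 0\}$ to stay a strictly positive distance from the PSD cone, and a compactness argument over the unit sphere $\{\mathbf J(\phi):\|\mathbf J(\phi)\|=1\}$ then gives the uniform positive gap needed for strong separation---so the closedness obstacle you flag is handled by compactness rather than by perturbation around $\hat X$.
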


	\section{preliminaries}
	\subsection{Matrix-valued polynomials in symmetric entries}
In this paper, we deal with matrix-valued noncommutative polynomials. Different from the commutative polynomials, the variables and coefficients are all matrices.
The polynomial $p$  we considered in this paper has the following form:
	\[ p=\sum_{\omega \in \mathcal{W}_m} p_{\omega} \omega,\]
	where $p_{\omega} \in$ { $\mathbb{R}^{q \times q}$, $q\in \mathbb{N}^{+}$} and $\mathcal{W}_{m}$ is a set of words generated by the entries of $x=[x_1,x_2, \ldots,x_m]^{T}$, and 
	\[p^{T}=\sum_{\omega \in \mathcal{W}_m} p_{\omega}^{T} \omega ^{T}.\]
	If $p=p^{T}$, we say $p$ is symmetric. When we evaluate a polynomial $p$ at $X \in (\mathbb{SR}^{n})^{m}$, we  define the empty word as $\rm{Id}_n$,  where  ${\rm
Id}_{n}$ denote the identity matrix in $\mathbb{R}^{n \times n}$ for $n\in \mathbb{N}^{+}$.

	{
For symmetric quadratic homogeneous matrix-valued polynomials
\[
	f(x)=\sum_{i=1,j=1}^{m} A_{ij} x_i x_j\quad\text{and}\quad
g(x)=\sum_{i=1,j=1}^{m} B_{ij} x_i x_j,
\]
where $A_{ij}=A_{ji}^{T},B_{ij}=B_{ji}^{T},A_{ij} \ B_{ij} \in
\mathbb{R}^{q \times q}$, the evaluations of $f$ and $g$ at $X \in
(\mathbb{SR}^{n})^{m}$ are
\[
f(X)=\sum_{i=1,j=1}^{m} A_{ij}\otimes X_i X_j\quad\text{and}\quad
g(X)=\sum_{i=1,j=1}^{m} B_{ij}\otimes X_i X_j.
\]
}

	If we restrict the coefficients being real numbers, i.e.,  $q=1$, then we have  noncommutative polynomial
\[
p=\sum_{\omega \in \mathcal{W}_m} p_{\omega} \omega, ~ p_{\omega} \in \mathbb{R}. \]
	\subsection{The classical S-lemma}
	When $f$ and $g$ are homogeneous quadratic  polynomials, there are many different approaches for proving the S-lemma in commutative case. In \cite{VA1971}, Yakubovich used the convexity result  in \cite{Dines1941} to prove the S-lemma. A modern proof can be found in the book
by Ben-Tal and  Nemirovski \cite{AA2001}. An elementary proof of the S-lemma could be derived  based on a lemma given by  Yuan~\cite{Yuan1990}. See
excellent survey on S-lemma by P\'olik and Terlaky~\cite{PITT2007}. We
introduce  below  one of them which is suitable  for being extended to
the noncommutative cases.
	\begin{theorem}\label{s-lemmacom}
		Given $f,g:\mathbb{R}^{m}\rightarrow \mathbb{R}$ are homogeneous quadratic polynomials, and suppose there is an $\hat{X}\in \mathbb{R}^{m}$ such that $g(\hat{X})>0$. Then the following two statements are equivalent.
		\begin{enumerate}[\upshape (1)]
			\item For all $ X \in \mathbb{R}^{m}$, if $g(X) \geq 0$, then $f(X) \geq 0$.
			\item There is a nonnegative real number  $\lambda$ such that $f(X)-\lambda g(X) \geq 0$ for all $X \in \mathbb{R}^{m}$.
		\end{enumerate}
	\end{theorem}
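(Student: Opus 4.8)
The implication $(2)\Rightarrow(1)$ is immediate: if $f(X)-\lambda g(X)\ge 0$ for all $X\in\mathbb{R}^{m}$ with $\lambda\ge 0$, and $g(X)\ge 0$, then $f(X)\ge\lambda g(X)\ge 0$. So the whole content is $(1)\Rightarrow(2)$, and the plan is the classical separation argument (essentially Yakubovich's, built on Dines' convexity theorem); this is the skeleton that the matrix-valued versions in Theorems~\ref{thm3.1}, \ref{thm1.1} and \ref{thm1.3} will later imitate, with $\lambda$ replaced by a completely positive map.

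Write $f(x)=x^{T}Ax$ and $g(x)=x^{T}Bx$ with $A,B\in\mathbb{SR}^{m}$, and introduce the joint range
\[
\mathcal{D}=\{\,(f(x),g(x)):x\in\mathbb{R}^{m}\,\}\subseteq\mathbb{R}^{2}.
\]
Since $f$ and $g$ are homogeneous of degree two, $\mathcal{D}$ is invariant under multiplication by nonnegative scalars and contains the origin; in particular it is a cone. The one nontrivial input is \emph{Dines' theorem}: for any two real symmetric $m\times m$ matrices the joint range $\mathcal{D}$ is convex. I would quote this as a classical fact; if a self-contained argument is wanted, one fixes $x_{1},x_{2}$, lets $z_{\theta}=\cos\theta\,x_{1}+\sin\theta\,x_{2}$, and checks that $\theta\mapsto(f(z_{\theta}),g(z_{\theta}))$ traces an ellipse (possibly degenerate to a segment or a point) centred at $\tfrac12\bigl((f(x_{1}),g(x_{1}))+(f(x_{2}),g(x_{2}))\bigr)$; since $\mathcal{D}$ is a cone, the ray from the origin through that centre meets the ellipse in a point that is a positive multiple of the centre, which places the centre itself in $\mathcal{D}$, and convexity of the cone $\mathcal{D}$ follows.

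Now reformulate $(1)$. Let $C=\{(u,v)\in\mathbb{R}^{2}:u\le 0,\ v\ge 0\}$, a closed convex cone with nonempty interior $\operatorname{int}C=\{u<0,\ v>0\}$. Statement $(1)$ forbids any $x$ with $g(x)>0$ and $f(x)<0$, so $\mathcal{D}\cap\operatorname{int}C=\emptyset$. Since $\mathcal{D}$ is convex and $\operatorname{int}C$ is open, nonempty and convex, the separating hyperplane theorem gives a nonzero $(\alpha,\beta)\in\mathbb{R}^{2}$ and $c\in\mathbb{R}$ with $\alpha u+\beta v\ge c$ on $\mathcal{D}$ and $\alpha u+\beta v\le c$ on $\operatorname{int}C$. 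Evaluating the first inequality at the origin gives $c\le 0$; and since a linear functional bounded below on a cone must be nonnegative on it, we get $\alpha u+\beta v\ge 0$ for all $(u,v)\in\mathcal{D}$, while by continuity $\alpha u+\beta v\le c\le 0$ on all of $C$.

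It remains to extract the sign of $\lambda$. Testing $\alpha u+\beta v\le 0$ on $C$ at $(-1,0)$ and at $(0,1)$ yields $\alpha\ge 0$ and $\beta\le 0$. If $\alpha=0$, then $\beta g(x)\ge 0$ for all $x$ with $\beta\le 0$, hence $g(x)\le 0$ for all $x$, contradicting the Slater hypothesis $g(\hat{X})>0$; therefore $\alpha>0$. Setting $\lambda=-\beta/\alpha\ge 0$ and dividing $\alpha f(x)+\beta g(x)\ge 0$ by $\alpha$ gives $f(x)-\lambda g(x)\ge 0$ for all $x\in\mathbb{R}^{m}$, which is $(2)$. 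The only genuine obstacle is Dines' convexity theorem; everything else is a routine separation together with the use of the Slater point, and this separation-plus-Slater pattern is precisely what will be carried over to the noncommutative settings in the sequel.
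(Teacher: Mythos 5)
Your proof is correct, but it follows a genuinely different route from the paper's. You separate in $\mathbb{R}^{2}$: you form the joint range $\mathcal{D}=\{(f(x),g(x))\}$, invoke Dines' convexity theorem, and separate $\mathcal{D}$ from the open quadrant $\{u<0,\ v>0\}$; the Slater point then forces the $f$-coefficient of the separating functional to be positive, and $\lambda$ drops out as a ratio. This is essentially the Yakubovich--Dines proof that the paper mentions in passing but does not use. The paper instead separates in the space $\mathbb{SR}^{m}$ of coefficient matrices: it shows the PSD cone $C$ and the ray $D=\{A-\lambda B:\lambda\ge 0\}$ are disjoint and at positive distance (using the Slater point to show $B$ has a positive eigenvalue, so the ray escapes to infinity away from $C$), obtains a separator $S\succeq 0$ with $\langle S,A\rangle<0$ and $\langle S,B\rangle\ge 0$, and then invokes a rank-one reduction (Corollary 6.1.4 of \cite{ML2012}) to descend from the matrix $S$ to a vector $X$ violating (1). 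Your version is more self-contained once Dines is granted (and your sketch of Dines via the ellipse-plus-cone argument is sound), whereas the paper's version is the one whose skeleton actually survives the passage to the matrix-valued setting: there the ray $D$ becomes the cone $\{\mathcal{A}-(\mathbbm{1}_m\otimes\phi)\mathcal{B}\}$ over completely positive maps, and the rank-one reduction is replaced by the explicit construction of the evaluation point $X^{M}$ from a decomposition of the separator $M^{s}$ together with a projection. So your closing remark that your separation-plus-Slater pattern is ``precisely'' what carries over is not quite accurate --- the two-dimensional joint-range convexity has no noncommutative analogue in the paper --- but this does not affect the validity of your proof of the commutative statement. Two microscopic points: in the case $\alpha=0$ you should note explicitly that then $\beta\neq 0$ (from $(\alpha,\beta)\neq 0$) before dividing, and condition (1) also excludes points with $g(x)=0$, $f(x)<0$, though you correctly only need the weaker consequence $\mathcal{D}\cap\operatorname{int}C=\emptyset$.
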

	\begin{proof}
		Let $f(x)$, $g(x)$ be homogeneous quadratic polynomials. There are symmetric matrices  $A$, $B \in \mathbb{R}^{m \times m}$ such that
		\[ f(x)=x^{T} A x, ~~  g(x)=x^{T} B x.\]
	It is well known that  $h(X)=X^{T} H X \geq 0$ for all $X \in \mathbb{R}^{m}$ if and only if $H\succeq 0$.

The implication (2)$\Longrightarrow$(1) is obvious.  Now, assume that the condition  (2) is false, we show the condition (1) is false too. Consider two convex closed sets \[C=\{M\succeq 0 ~|~ M \in \mathbb{R}^{m \times m}\},\]
and
\[D=\{A-\lambda B ~|~ \lambda \geq 0\}.\]
As the condition  (2) is false, $C\cap D=\emptyset$, i.e., there is
  no nonnegative real number   $\lambda$ such that $A-\lambda B
  \succeq 0$.  Since  there is an $\hat{X}$ such that $g(\hat{X})>0$,
  we have $g(\hat{X})={\hat{X}}^{T} B \hat{X} >0$, which  means $B$
  must have a positive eigenvalue. Therefore, there must exist  a
  large enough positive real number  $\lambda_0$ such that
  $A-\lambda_0 B$ has a negative eigenvalue.
   Therefore, for  $\lambda>\lambda_0$,  the distance between $A-\lambda B$ and $C$ will get  larger when  $\lambda \rightarrow + \infty $.
    The topology and distance we used here are the general topology and distance of finite dimensional real Euclidean space.

	It is clear that
    \begin{align*}
    &\inf\{\|M_1-M_2\|~|~ M_1\in C,\ M_2 \in D\}\\
    =&\inf\{\|M_1-M_2\|~|~M_1\in C,\ M_2 \in \{A-\lambda B ~|~\lambda \leq  \lambda_0\}\}=d>0.
    \end{align*}
     By the separation theorem~\cite[Theorem 11.4]{RTR1970}, there exists an $S\in \mathbb{R}^{m \times m}$, $S\neq 0$ such that
		\[	\langle S,M_1 \rangle \geq a > \langle S,M_2 \rangle, ~{\rm for ~all}~  \quad ~M_1 \in C,M_2 \in D. \]
	As $C$ is a positive semidefinite cone,  $S\succeq0$ and $a=0$.

Since $\langle S, A-\lambda B \rangle < 0$ for all $\lambda \geq0$.
Let us  assume $\lambda =0$, then  we have $\langle S,A \rangle <0$.
Let $\lambda \rightarrow +\infty$, we have  $\langle S,B \rangle \geq
0$. Since $S$ is positive semidefinite, if   $\langle S,A \rangle <0$
and  $\langle S,B \rangle \geq 0$, according to Corollary 6.1.4 in
\cite{ML2012}, there exists an $X\in \mathbb{R}^{m}$ such that
$X^{T}AX<0$ and $X^{T}BX \geq 0$. Hence we have found an $X \in
\mathbb{R}^{m}$ such that $g(X)\geq 0$ and $f(X)<0$, which contradicts the condition (1).
	\end{proof}

	\subsection{Completely positive linear map} 
 A real number can be seen as a linear map form $\mathbb{R}$ to $\mathbb{R}$, and if the number is positive, the linear map translates a positive real number to a positive real number. Similarly,  we can define  positive linear maps and completely positive linear maps between real vector spaces of higher dimensions.

	A linear map $\phi:\mathbb{R}^{s \times s}\rightarrow \mathbb{R}^{t \times t}$, where  $s,t \in \mathbb{N}^+$ can be represented by a matrix in $\mathbb{R}^{(s\times t)\times (s \times t)}$
	\begin{align}\label{jphi}
\mathbf{J}(\phi)=\sum_{a,b=1}^{s} \phi(E_{ab}) \otimes E_{ab}=
	\begin{pmatrix}
		J_{11} &\cdots &J_{1t}\\
		\vdots& \ddots& \vdots\\
		J_{t1} &\cdots &J_{tt}
	\end{pmatrix},
\end{align}
	where $J_{ij} \in \mathbb{R}^{s \times s}$ and $E_{ab} \in \mathbb{R}^{s \times s}$ are   matrices  whose $(a,b)$-th entry is $1$ and all others are
	$0$. The matrix $\mathbf{J}(\phi )$ is called the Choi matrix of $\phi$ ~\cite{MDC1975}. It is easy to verify that for any $M\in\mathbb{R}^{s
		\times s}$,
\[
\phi(M)=
	\begin{pmatrix}
		\langle J_{11},M \rangle  &\cdots &\langle J_{1t},M \rangle \\
		\vdots& \ddots& \vdots\\
		\langle J_{t1},M \rangle &\cdots &\langle J_{tt},M \rangle
	\end{pmatrix}.
\]
	We say that the linear map $\phi$ is positive, if for every positive semidefinite matrix  $ M \in \mathbb{R}^{s \times s}, M \succeq 0$, its image under the map $\phi$ is also positive semidefinite, i.e., $\phi(M) \succeq 0$.
 Recall that $\mathbbm{1}_n$ represents the identity map from
 $\mathbb{R}^{n \times n}$ to $\mathbb{R}^{n \times n}$. We say $\phi$
 is completely positive, if for all $ n \in \mathbb{N}^+$, the linear
 map  $\phi \otimes  \mathbbm{1}_n$ is a positive linear map  from
 $\mathbb{R}^{(sn) \times (sn)}$ to $\mathbb{R}^{(sn) \times (sn)}$.

	\begin{theorem}\label{thm2.2}
		~\cite{MDC1975}
		The linear map $\phi : \mathbb{R}^{s \times s} \rightarrow \mathbb{R}^{t \times t}$ where $s,t \in \mathbb{N}^+$ is completely positive, if and only if the Choi matrix $\mathbf{J}(\phi) \succeq 0$.
	\end{theorem}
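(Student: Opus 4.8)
The easy implication is $(2)\Rightarrow(1)$: if $\phi$ is completely positive then, by definition, $\phi\otimes\mathbbm{1}_n$ is a positive map for every $n$, so $g(X)\succeq 0$ forces $(\phi\otimes\mathbbm{1}_n)g(X)\succeq 0$ and hence $f(X)=\big(f(X)-(\phi\otimes\mathbbm{1}_n)g(X)\big)+(\phi\otimes\mathbbm{1}_n)g(X)\succeq 0$. For $(1)\Rightarrow(2)$ the plan is to run the separation argument of Theorem~\ref{s-lemmacom} at the level of the coefficient arrays $\mathcal{A}=(A_{ij})$ and $\mathcal{B}=(B_{ij})$ in $\mathbb{SR}^{mq}$.

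The first step is the hereditary analogue of Theorem~\ref{thm4.1}: a symmetric hereditary polynomial $h(x)=\sum_{i,j}C_{ij}x_ix_j^{T}$ satisfies $h(X)=\sum_{i,j}C_{ij}\otimes X_iX_j^{T}\succeq 0$ for all $X\in(\mathbb{R}^{n\times n})^m$ and all $n$ if and only if $\mathcal{C}=(C_{ij})\succeq 0$. The ``if'' direction follows by writing $h(X)=W(\mathcal{C}\otimes{\rm Id}_n)W^{T}$ with $W$ depending only on $X$; for ``only if'' one plugs in matrices $X_i$ whose columns realise a rank-one configuration built from a hypothetical vector $w$ with $w^{T}\mathcal{C}w<0$. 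Since $(\phi\otimes\mathbbm{1}_n)g(X)=\sum_{i,j}\phi(B_{ij})\otimes X_iX_j^{T}$, and a completely positive $\phi$ has symmetric Choi matrix so that $\phi(M^{T})=\phi(M)^{T}$ (hence $\big(A_{ij}-\phi(B_{ij})\big)$ is again a symmetric coefficient array), this lemma shows that $(2)$ is equivalent to the existence of a nonzero completely positive $\phi$ with $\mathcal{A}-\widetilde{\phi}(\mathcal{B})\succeq 0$, where $\widetilde{\phi}$ applies $\phi$ to each $q\times q$ block. By Theorem~\ref{thm2.2} the set $\mathcal{D}=\{\widetilde{\phi}(\mathcal{B}):\phi\text{ completely positive}\}$ is a convex cone in $\mathbb{SR}^{mq}$, and $(2)$ (disregarding for the moment the requirement $\phi\ne 0$) just says $\mathcal{A}\in\mathcal{C}+\mathcal{D}$, with $\mathcal{C}$ the cone of positive semidefinite matrices.

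Assume now $(1)$ holds but $(2)$ fails, so $\mathcal{A}\notin\mathcal{C}+\mathcal{D}$. Separating $\mathcal{A}$ from the closed convex cone $\mathcal{C}+\mathcal{D}$ (as in the proof of Theorem~\ref{s-lemmacom}) yields $S\in\mathbb{SR}^{mq}$, $S\ne 0$, with $\langle S,\mathcal{A}\rangle<0$ and $\langle S,M\rangle\ge 0$ for all $M\in\mathcal{C}+\mathcal{D}$; the part over $\mathcal{C}$ gives $S\succeq 0$, and the part over $\mathcal{D}$, after passing to adjoints (using that $\phi$ is completely positive iff $\phi^{*}$ is), gives $\langle\mathcal{B},\widetilde{\psi}(S)\rangle\ge 0$ for every completely positive $\psi$. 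Writing a factorisation $S=\sum_t s_ts_t^{T}$ with blocks $s_t^{(i)}\in\mathbb{R}^{q}$ and restricting to the rank-one Kraus maps $\psi_M(N)=MNM^{T}$, this becomes
\[
\sum_{t}\big(({\rm Id}_m\otimes M)s_t\big)^{T}\mathcal{B}\,\big(({\rm Id}_m\otimes M)s_t\big)\ \ge\ 0\qquad\text{for all }M\in\mathbb{R}^{q\times q}.
\]

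The remaining, and hardest, step is to manufacture from $S$ a point $X\in(\mathbb{R}^{n\times n})^m$ with $g(X)\succeq 0$ but $f(X)\not\succeq 0$, contradicting $(1)$ — this is the noncommutative hereditary counterpart of the two–quadratic–forms lemma (Corollary~6.1.4 of~\cite{ML2012}) used in the commutative proof. I would take $n$ comparable to ${\rm rank}\,S$ and define the $X_i$ so that the blocks $s_t^{(i)}$ of the factorisation of $S$ are encoded in the rows (or columns) of $X_i$; one then checks that $v^{T}f(X)v$ reproduces $\langle S,\mathcal{A}\rangle<0$ for suitable test vectors $v$, so $f(X)\not\succeq 0$, while the whole quadratic form $v\mapsto v^{T}g(X)v$ should be carried, after a reparametrisation $v\leftrightarrow M$, exactly by the displayed family and hence be nonnegative. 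The delicate point — and where I expect the main difficulty to lie — is to choose $n$ and the internal structure of the $X_i$ so that the induced family of quadratic forms is \emph{precisely} the one controlled by the completely-positivity inequality above, and not a strictly larger family; here the Slater-type hypothesis $g(\hat X)\succ 0$ should be used (playing the role of $\hat X$ in Theorem~\ref{s-lemmacom}), both to run this extraction and to dispose of the degenerate situation $\mathcal{A}\succeq 0$, in which $\mathcal{C}+\mathcal{D}$ is not pointed and the required nonzero $\phi$ must instead be produced by a small perturbation. The closedness of $\mathcal{D}$ is a routine technical point.
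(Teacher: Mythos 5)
Your proposal does not prove the statement in question. The statement is Choi's theorem: a linear map $\phi:\mathbb{R}^{s\times s}\to\mathbb{R}^{t\times t}$ is completely positive if and only if its Choi matrix $\mathbf{J}(\phi)=\sum_{a,b}\phi(E_{ab})\otimes E_{ab}$ is positive semidefinite. What you have written is instead a proof sketch of the noncommutative S-lemma (the equivalence of conditions (1) and (2) in Theorems \ref{thm1.1}/\ref{thm1.3}): you discuss $f$, $g$, their coefficient arrays $\mathcal{A}$, $\mathcal{B}$, a separation argument, and the extraction of a counterexample point $X$. None of this establishes the equivalence between complete positivity of $\phi$ and $\mathbf{J}(\phi)\succeq 0$; indeed, you explicitly \emph{invoke} Theorem \ref{thm2.2} in the middle of your argument (to identify the cone $\mathcal{D}$ and to parametrize completely positive maps by positive semidefinite Choi matrices), which would be circular if this were offered as a proof of that theorem. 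Note that the paper itself does not prove Theorem \ref{thm2.2}; it is quoted from Choi's 1975 paper.

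For the record, a direct proof runs as follows. If $\phi$ is completely positive, then $\phi\otimes\mathbbm{1}_s$ applied to the positive semidefinite matrix $\sum_{a,b}E_{ab}\otimes E_{ab}=\left(\sum_a e_a\otimes e_a\right)\left(\sum_b e_b\otimes e_b\right)^{T}$ yields $\mathbf{J}(\phi)\succeq 0$. Conversely, if $\mathbf{J}(\phi)\succeq 0$, a rank-one decomposition $\mathbf{J}(\phi)=\sum_k w_kw_k^{T}$ gives a Kraus representation $\phi(M)=\sum_k V_kMV_k^{T}$ for suitable $V_k\in\mathbb{R}^{t\times s}$ obtained by reshaping the $w_k$, and any map of this form is completely positive since $(\phi\otimes\mathbbm{1}_n)(N)=\sum_k(V_k\otimes \mathrm{Id}_n)N(V_k\otimes \mathrm{Id}_n)^{T}$ preserves positive semidefiniteness. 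If you intended your text as a proof of Theorem \ref{thm1.1} or \ref{thm1.3}, it should be resubmitted against that statement.
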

There is a one-to-one  correspondence  between the set of all
completely positive maps   from $\mathbb{R}^{s \times s}$ to
$\mathbb{R}^{t \times t}$ and the set of  positive semidefinite
matrices in $\mathbb{R}^{(st)\times (st)}$.
	\section{S-lemma of noncommutative polynomials}
	In this section, we prove the S-lemma for
	noncommutative polynomials (Theorem \ref{thm3.1}). Suppose that we
	are given polynomials
\[
	f(x)=\sum_{i=1,j=1}^{m} a_{ij} x_i x_j\quad\text{and}\quad
	g(x)=\sum_{i=1,j=1}^{m} b_{ij} x_i x_j,
\]
where $a_{i,j}, b_{i,j}\in\mathbb{R}$ and $a_{ij}=a_{ji}$,
$b_{ij}=b_{ji}$ for all $i, j$. Define
\[A=\begin{pmatrix}
	a_{11}&\cdots&a_{1m}\\
	\vdots&\ddots&\vdots\\
	a_{m1}&\cdots&a_{mm}\\
\end{pmatrix}
\quad\text{and}\quad
B=\begin{pmatrix}
	b_{11}&\cdots&b_{1m}\\
	\vdots&\ddots&\vdots\\
	b_{m1}&\cdots&b_{mm}\\
\end{pmatrix}.
\]

\noindent{\itshape Proof of Theorem \ref{thm3.1}.}
		(3)$\Longrightarrow$(2)$\Longrightarrow$(1): The implications are obvious.
		
		(1)$\Longrightarrow$(3): Assume that for all $X \in
		\mathbb{R}^m$, $g(X)=X^{T} B X \le 0$. Then we know $B\preceq
		0$ and hence
		\[g(X)=X^{T}(B\otimes {\rm Id}_{n}) X \preceq 0,\]
		for all $X \in (\mathbb{SR}^{n})^m$, $n\in\mathbb{N}^+$,
which contradicts the condition that there is an $\hat{X} \in
(\mathbb{SR}^{\hat{n}})^m$ for some $\hat{n} \in \mathbb{N}^{+}$, such that
$g(\hat{X}) \succ 0$.  Hence, there always exists  an $\tilde{X}\in
\mathbb{R}^{m}$ such that $g(\tilde{X})>0$. According to Theorem
\ref{s-lemmacom},  we can derive that  there exists a positive
 real number $\lambda $ such that $f(X)-\lambda g(X) \geq 0$ for all $ X\in \mathbb{R}^m$, especially $A-\lambda B \succeq 0$. Then we know
 \[f(X)-\lambda g(X)=X^{T}((A-\lambda B)\otimes {\rm Id}_{n}) X \succeq 0,\]
 for all $ X\in (\mathbb{SR}^{n})^m$, $n \in \mathbb{N}^{+}$.
 \qed

	\section{Positivity of symmetric quadratic homogeneous matrix-valued polynomials}
For a commutative polynomial $h(X)=X^{T} H X$ with
$H\in\mathbb{SR}^{m}$, we know $h(X)\geq 0$ for all $X \in
\mathbb{R}^{m}$ if and only if $H\succeq 0$. 	
It is very interesting to see that this property can be extended to noncommutative polynomials.
	\begin{theorem} \label{thm4.1}
		Let $f(x)=\sum_{i=1,j=1}^{m} A_{ij} x_i x_j$ be a symmetric
		quadratic homogeneous matrix-valued polynomial, where the
		matrices  $A_{ij}=A_{ji}^{T} \in \mathbb{R}^{q\times q}$ for
		all $i, j$. Define the coefficient matrix
		\[ \mathcal{A}=
		\begin{pmatrix}
			A_{11} & \cdots & A_{1m}\\
			\vdots & \ddots & \vdots \\
			A_{m1} & \cdots & A_{mm}
		\end{pmatrix}. \]
	Then $f(X) $ is positive semidefinite for all $X\in
	(\mathbb{SR}^{n })^{m}$, $n\in\mathbb{N}^{+}$,
 if and only if $\mathcal{A}$ is positive semidefinite.

	\end{theorem}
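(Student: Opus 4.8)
The plan is to prove the two implications separately, each by an explicit construction, with no appeal to Positivstellensatz machinery: the statement is really a linear-algebra identity once the right evaluations are chosen.

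For the implication $\mathcal{A}\succeq 0 \Rightarrow f(X)\succeq 0$, I would start from a factorization $\mathcal{A}=\sum_k v_k v_k^T$ with $v_k\in\mathbb{R}^{mq}$ and split each $v_k$ into $m$ blocks $v_k^{(i)}\in\mathbb{R}^q$, so that $A_{ij}=\sum_k v_k^{(i)}(v_k^{(j)})^T$. Plugging this into $f(X)=\sum_{i,j}A_{ij}\otimes X_iX_j$ and using the mixed-product rule $(a\otimes X)(b^T\otimes Y)=ab^T\otimes XY$, one regroups the double sum as $f(X)=\sum_k W_kW_k^T$ with $W_k=\sum_i v_k^{(i)}\otimes X_i$. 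The hypothesis enters through the identity $\sum_j (v_k^{(j)})^T\otimes X_j=\bigl(\sum_j v_k^{(j)}\otimes X_j\bigr)^T$, which is valid precisely because $X_j=X_j^T$. Hence $f(X)$ is a sum of terms $W_kW_k^T$ and is positive semidefinite for every $X\in(\mathbb{SR}^n)^m$ and every $n\in\mathbb{N}^+$.

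For the converse, the idea is to recover the whole matrix $\mathcal{A}$ from a single carefully chosen evaluation of $f$. I would take $n=m+1$ with an orthonormal basis $e_0,e_1,\dots,e_m$ of $\mathbb{R}^{m+1}$ and set $X_i=e_0e_i^T+e_ie_0^T$ for $i=1,\dots,m$; these are symmetric, and a direct computation gives $X_iX_j=\delta_{ij}e_0e_0^T+e_ie_j^T$, so that $f(X)=\bigl(\sum_i A_{ii}\bigr)\otimes e_0e_0^T+\sum_{i,j}A_{ij}\otimes e_ie_j^T$. For an arbitrary $u\in\mathbb{R}^{mq}$ with blocks $u^{(i)}\in\mathbb{R}^q$, put $\xi=\sum_{i=1}^m u^{(i)}\otimes e_i$; since $\xi$ has no $e_0$-component the first term annihilates $\xi$, and what remains is $\xi^T f(X)\xi=\sum_{i,j}(u^{(i)})^T A_{ij}u^{(j)}=u^T\mathcal{A}u$. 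As $f(X)\succeq 0$ by hypothesis, $u^T\mathcal{A}u\ge 0$ for every $u$, i.e.\ $\mathcal{A}\succeq 0$.

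The only place that needs care is the tensor-product bookkeeping: checking that the test tuple $X_i=e_0e_i^T+e_ie_0^T$ multiplies as claimed (the two cross terms vanish because $e_0^Te_i=0$ for $i\ge 1$), and, more importantly, invoking the symmetry of the $X_i$ exactly where it is needed in the factorization of the first implication — without it, $\sum_i v_k^{(i)}\otimes X_i$ and its would-be conjugate do not assemble into a genuine $WW^T$. I do not expect either step to be a real obstacle.
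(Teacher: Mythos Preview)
Your proof is correct in both directions. The converse implication is essentially identical to the paper's: the test tuple $X_i=e_0e_i^T+e_ie_0^T$ in $\mathbb{SR}^{m+1}$ is exactly the paper's $X_i^0$, and your test-vector computation $\xi^Tf(X)\xi=u^T\mathcal{A}u$ is the same block-matrix observation, just read off in the $A_{ij}\otimes X_iX_j$ ordering rather than after swapping to $X_iX_j\otimes A_{ij}$.

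The forward implication, however, takes a genuinely different route. The paper rearranges $\mathcal{A}$ into its shuffle $\mathcal{A}'$, interprets $\mathcal{A}'$ as the Choi matrix of a linear map $\psi_f:\mathbb{R}^{m\times m}\to\mathbb{R}^{q\times q}$, invokes Choi's theorem to conclude $\psi_f$ is completely positive, and then writes $f(X)=(\psi_f\otimes\mathbbm{1}_n)(XX^T)$ with $XX^T\succeq 0$. Your argument bypasses Choi entirely: you go straight from a rank decomposition $\mathcal{A}=\sum_k v_kv_k^T$ to the explicit sum-of-squares $f(X)=\sum_k W_kW_k^T$. This is more elementary and self-contained, and in fact the paper remarks after the theorem that such a factorization $f(x)=U(x)U(x)^T$ follows from the result --- you have simply used it as the proof itself. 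What the paper's approach buys is that it sets up the $\psi_f$ and completely-positive-map machinery that is reused in the proof of their main S-lemma (Theorem~\ref{thm1.1}); your approach buys a cleaner standalone argument with no external input beyond the mixed-product rule and the symmetry of the $X_i$.
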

	
	\begin{proof}
		Let us  rearrange the matrix $\mathcal{A}$ to define a matrix in $\mathbb{R}^{q \times q} \otimes \mathbb{R}^{m \times m}$.
\begin{align}\label{A'}	
\mathcal{A}^{'}=
    \begin{pmatrix}
	\mathcal{A}^{'}_{11} &\cdots &\mathcal{A}^{'}_{1q} \\
	\vdots& \ddots& \vdots\\
	\mathcal{A}^{'}_{q1} &\cdots &\mathcal{A}^{'}_{qq}
    \end{pmatrix}=
\end{align}
\begin{align*}
		\begin{pmatrix}
		\langle A_{11},E_{11} \rangle & \cdots &\langle A_{1m},E_{11} \rangle& & \langle A_{11},E_{1q} \rangle & \cdots & \langle A_{1m},E_{1q} \rangle\\
		\vdots & \ddots &\vdots &\cdots & \vdots & \ddots & \vdots\\
		\langle A_{m1},E_{11} \rangle & \cdots &\langle A_{mm},E_{11} \rangle & & \langle A_{m1},E_{1q} \rangle & \cdots &\langle A_{mm},E_{1q} \rangle\\
		& \vdots & &\ddots &  & \vdots & \\
		\langle A_{11},E_{q1} \rangle & \cdots &\langle A_{1m},E_{q1} \rangle & & \langle A_{11},E_{qq} \rangle & \cdots & \langle A_{1m},E_{qq} \rangle\\
		\vdots & \ddots &\vdots &\cdots & \vdots & \ddots & \vdots\\
		\langle A_{m1},E_{q1} \rangle & \cdots &\langle A_{mm},E_{q1} \rangle & &\langle A_{m1},E_{qq} \rangle & \cdots & \langle A_{mm},E_{qq} \rangle\\
	\end{pmatrix}.
\end{align*}
Using the matrix $\mathcal{A}^{'}$ as the Choi matrix, define a linear map
\begin{equation}\label{phif}
 \begin{aligned}
 \psi _{f} :\mathbb{R}^{m \times m} &\rightarrow \mathbb{R}^{q \times q} \nonumber \\
		    M&\mapsto\begin{pmatrix}
			\langle \mathcal{A}^{'}_{11},M \rangle  &\cdots &\langle \mathcal{A}^{'}_{1q},M \rangle \\
			\vdots& \ddots& \vdots\\
			\langle \mathcal{A}^{'}_{q1},M \rangle &\cdots &\langle \mathcal{A}^{'}_{qq},M \rangle
		\end{pmatrix}.
\end{aligned}
\end{equation}
It is essential  to notice that
\[f(X)=\psi_{f} \otimes  \mathbbm{1}_n\begin{pmatrix}
			X_{1}X_{1} & \cdots & X_{1}X_{m}\\
			\vdots & \ddots &\vdots \\
			X_{m}X_{1} & \cdots &X_{m}X_{m}
		\end{pmatrix}\]
for all  $X\in (\mathbb{SR}^{n })^{m}$, $n\in \mathbb{N}^+$.

	Let $\{\alpha_1, \alpha_2, \ldots, \alpha_q\}$ be the standard orthogonal basis of $\mathbb{R}^q$, $\{\beta_1, \beta_2, \ldots, \beta_m\}$ be the standard orthogonal basis of $\mathbb{R}^m$, and
     \[u=\sum_{j=1}^{q} \sum_{i=1}^{m} (\alpha_j \otimes
	\beta_i)(\beta_i \otimes \alpha_j)^{T}. \]

	Now let us  assume that $\mathcal{A}$ is positive semidefinite.
	As the matrix $\mathcal{A}^{'}$ is obtained after applying unitary
	transformation by $u$ to the matrix $\mathcal{A}$,
	$\mathcal{A}^{'}$  is still positive semidefinite.
 According to Theorem \ref{thm2.2}, the linear map $\psi_{f}$ is completely positive.
  Hence, $\psi_{f} \otimes  \mathbbm{1}_n$ is a positive linear map for all  $ n \in \mathbb{N}^{+}$. Since
  \[\begin{pmatrix}
			X_{1}X_{1} & \cdots & X_{1}X_{m}\\
			\vdots & \ddots &\vdots \\
			X_{m}X_{1} & \cdots &X_{m}X_{m}
		\end{pmatrix}=\begin{pmatrix} X_1\\
\vdots\\
 X_m \end{pmatrix}\cdot (X_1, \ldots, X_m)\]
 is positive semidefinite,  we know that $f(X)$ is positive semidefinite for all  $X\in (\mathbb{SR}^{n })^{m}$, $n\in \mathbb{N}^+$.

On the other hand,  we  define $f'(X)=\sum_{i=1,j=1}^{m}  X_i X_j\otimes A_{ij}$.
 It is obvious that for any $X\in (\mathbb{SR}^{n })^{m}$, $n\in
 \mathbb{N}^+$,
 \[
 f(X)\succeq 0 \Leftrightarrow f'(X) \succeq 0.
 \]

  Let $X^0:=(X_1^0,\ldots,X_m^0) \in (\mathbb{SR}^{(m+1)})^m$,
  where each $X^0_i$ is the matrix whose
  $(1,i+1)$-th entry and $(i+1,1)$-th entry are $1$ and all others are
  $0$, i.e.,
		\[
			X^0_i:=	\bordermatrix{ & & &(i+1)\text{-th}& & \cr
	& 0&\cdots&1&\cdots&0\cr
	& \vdots &\ddots &\vdots &\ddots &\vdots\cr
	(i+1)\text{-th}& 1 & \cdots&0 &\cdots &0\cr
	& \vdots &\ddots &\vdots &\ddots &\vdots\cr
	& 0 & \cdots& 0&\cdots &0}.
		\]
		It is easy to check that
		\[f'(X^0)=	\begin{pmatrix}
			\sum_{i=1}^{m} A_{ii}& & & \\
			&A_{11}&\cdots&A_{1m}\\
			&\vdots&\ddots&\vdots\\
			&A_{m1}&\cdots&A_{mm}\\
		\end{pmatrix}.\]
By assumption $f'(X^0)\succeq 0$, and hence we have $\mathcal{A}\succeq 0$.
	\end{proof}
From this theorem, using the  spectral decomposition of $\mathcal{A}$,
we can factorize  the symmetric quadratic homogeneous matrix-valued
polynomials $f(x)$ which is positive semidefinite on
$(\mathbb{SR}^{n})^m$ for all $ n \in \mathbb{N}^{+}$ into the product
of  a linear homogeneous matrix-valued polynomial $U(x)$ and its
transpose $U(x)^T$
	\[f(x)=U(x)U(x)^{T}.\]
	 The dimension of  coefficients of the polynomial  $U$ is at most $(qm)\times (qm)$.

	 Let $h(x)$ be  a  matrix-valued polynomial having $m$ variables,
	 its degree is at most $2l$ and coefficients are matrices
	 belonging to
	 $\mathbb{R}^{q\times q}$. If $h(X)$ is positive semidefinite for
	 all $X\in (\mathbb{SR}^{n})^m$, $ n \in \mathbb{N}^{+}$,
	 McCullough has already shown   there exists a matrix-valued
	 polynomial $U(x)$, whose coefficients belong to
	 $\mathbb{R}^{(q\sum_{j=0}^{l}m^j)\times (q\sum_{j=0}^{l}m^j)}$,
	 such that $h(x)=U(x)^{T}U(x)$ \cite[Theorem  0.2]{MC2001}.
	 However, as the proofs use Arveson's extension theorem
	 \cite{Arverson69,paulsen_2003,Paulsen87}, it is unclear how to
	 construct the factorization.

	\section{S-lemma of matrix-valued polynomials}
	
Suppose that we are given polynomials
\[
	f(x)=\sum_{i=1,j=1}^{m} A_{ij} x_i x_j\quad\text{and}\quad
	g(x)=\sum_{i=1,j=1}^{m} B_{ij} x_i x_j,
\]
where $A_{ij}, B_{ij} \in \mathbb{R}^{q \times q}$ and
$A_{ij}=A_{ji}^{T}$, $B_{ij}=B_{ji}^{T}$ for all $i, j$.
In this section, we prove the S-lemma of matrix-valued polynomials
(Theorem \ref{thm1.1}).
Define
 \[\mathcal{A}=\begin{pmatrix}
		A_{11}&\cdots&A_{1m}\\
		\vdots&\ddots&\vdots\\
		A_{m1}&\cdots&A_{mm}\\
	\end{pmatrix}\quad\text{and}\quad
\mathcal{B}=\begin{pmatrix}
		B_{11}&\cdots&B_{1m}\\
		\vdots&\ddots&\vdots\\
		B_{m1}&\cdots&B_{mm}\\
	\end{pmatrix}.
\]

\noindent {\itshape Proof of Theorem \ref{thm1.1}.}
Assume that the condition (2) is satisfied.
Let  ${P}: \mathbb{R}^n \to
			\mathbb{R}^q$ be  the projection to the last $q$
			coordinates (actually $P$ could be any orthogonal projection matrix in $\mathbb{R}^{n \times n}$, or any matrix $Q\in \mathbb{R}^{n\times
				\ell}$, $\ell, n \in \mathbb{N}^{+}$,
see  Corollary \ref{cor5.1}).  For all $ X \in (\mathbb{SR}^{n})^{m}$, $ n,m \in \mathbb{N}^{+}$,  we have
		\begin{align*}
&\sum_{i=1,j=1}^{m} A_{ij}\otimes X_i X_j - \sum_{i=1,j=1}^{m} \phi  (B_{ij})\otimes X_i X_j \succeq 0 \\
 \Longrightarrow &\sum_{i=1,j=1}^{m} A_{ij}\otimes X_i X_j - \sum_{i=1,j=1}^{m} (\phi \otimes  \mathbbm{1}_n)  (B_{ij}\otimes X_i X_j) \succeq 0\\
\Longrightarrow & ({\rm Id}_{q}\otimes P) \left(\sum_{i=1,j=1}^{m}
A_{ij}\otimes X_i X_j\right) ({\rm Id}_{q}\otimes P)\\
	& - ({\rm Id}_{q}\otimes P)\left(\sum_{i=1,j=1}^{m} (\phi \otimes
	 \mathbbm{1}_n)  (B_{ij}\otimes X_i X_j)\right) ({\rm Id}_{q}\otimes
	P)\succeq 0\\
		 \Longrightarrow & ({\rm Id}_{q}\otimes P)
		 \left(\sum_{i=1,j=1}^{m} A_{ij}\otimes X_i X_j\right) ({\rm
		 Id}_{q}\otimes P)\\
		&-(\phi \otimes  \mathbbm{1}_n) ({\rm Id}_{q}\otimes
		P)\left(\sum_{i=1,j=1}^{m}B_{ij}\otimes X_i X_j\right)
		({\rm Id}_{q}\otimes P)\succeq 0.\\
  \Longrightarrow & ({\rm Id}_{q}\otimes P)
		 f(X)  ({\rm
		 Id}_{q}\otimes P) \succeq
		(\phi \otimes  \mathbbm{1}_n) ({\rm Id}_{q}\otimes
		P) g(X)
		({\rm Id}_{q}\otimes P).
\end{align*}
As $\phi$ is a completely positive linear map,
if $({\rm Id}_{q} \otimes {P})g(X)({\rm Id}_{q} \otimes {P}) \succeq 0$,
then  $(\phi \otimes  \mathbbm{1}_n) ({\rm Id}_{q}\otimes P) g(X) ({\rm
Id}_{q}\otimes P) \succeq 0$ and hence
$({\rm Id}_{q} \otimes {P})f(X)({\rm Id}_{q} \otimes {P}) \succeq 0$.
 The condition (1) is established.

 Now we assume that the condition  (2) is false, our aim is to show that the condition (1) is also false.  For a fixed linear map $\phi: \mathbb{R}^{q \times q} \rightarrow \mathbb{R}^{q \times q}$, let
 \begin{align}
 	\phi g(x)=\sum_{i=1,j=1}^{m}\phi (B_{ij})x_ix_j.
 \end{align}
 Consider the set
 \[\{f(x)-\phi g(x)~|~\phi:\mathbb{R}^{q \times
 	q} \rightarrow \mathbb{R}^{q \times q} ~{\rm  {is ~a ~completely~
	positive ~linear ~map}}\}.\]
 $f(x)-\phi g(x)$ is a homogeneous quadratic polynomial and its coefficient matrix has the following form
		\[\begin{pmatrix}
			A_{11}-\phi (B_{11})& \cdots & A_{1m}-\phi (B_{1m})\\
			\vdots & \ddots  & \vdots \\
			A_{m1}-\phi (B_{m1}) & \cdots & A_{mm}-\phi (B_{mm})
		\end{pmatrix}
		=\mathcal{A}-( \mathbbm{1}_m \otimes \phi) \mathcal{B}.\]
The set $\mathcal{D}$ which contains all such matrices is a closed convex cone in $\mathbb{SR}^{mq}$. Let $\mathcal{C}$ denote the positive semidefinite cone in $\mathbb{SR}^{mq}$.
Since the condition (2) is false, according to  Theorem \ref{thm4.1}, the coefficient matrix of $f(x)-\phi g(x)$  can not be positive semidefinite. Hence, we have $\mathcal{C}\cap \mathcal{D}=\emptyset$.

  Let us define
  \begin{align*}
 K=\{\mathbf{J}(\phi) ~|~ \phi:\mathbb{R}^{q \times
 	q} \rightarrow \mathbb{R}^{q \times q}  ~{\rm is ~ a ~ completely
	~ positive ~linear ~map}, ||\mathbf{J}(\phi)||=1\},
  \end{align*}
 where $\mathbf{J}(\phi)$ is defined by (\ref{jphi}).  The set
  $K$ is compact. For any completely positive linear map $\phi:\mathbb{R}^{q \times
  	q} \rightarrow \mathbb{R}^{q \times q}$ with $||\mathbf{J}(\phi)||=1$, define
  \begin{align*}
   \mathcal{D}_{\mathbf{J}(\phi)}=\{\mathcal{A}-\lambda( \mathbbm{1}_m \otimes \phi) \mathcal{B}~|~\lambda \geq 0 \},
    \end{align*}
and
		\[k(\mathbf{J}(\phi))=\inf\{\|\mathcal{M}_1-\mathcal{M}_2\|~|~\mathcal{M}_1 \in \mathcal{C},\ \mathcal{M}_2 \in \mathcal{D}_{\mathbf{J}(\phi)}\}.\]
		Then, $k(\mathbf{J}(\phi))$ can be seen as a continuous function on $K$.
		Since $K$ is compact, there is a completely positive linear
		map $\phi ^{0}$ and ${\mathbf{J}(\phi ^{0})} \in K$, such that
		$k(\mathbf{J}(\phi ^{0}))=\min_{\mathbf{J}(\phi)\in K} k(\mathbf{J}(\phi))$.
		
	For the completely positive linear map $\phi^{0}$, we have
	\begin{equation}\label{precond}
	\begin{aligned}	
		({\phi}^{0}\otimes  \mathbbm{1}_{\hat{n}})
		g(\hat{X})&=\sum_{i=1,j=1}^{m}{\phi}^{0}(B_{ij})\otimes
		{\hat{X}}_i {\hat{X}}_j\succeq 0,\\
		({\phi}^{0}\otimes  \mathbbm{1}_{\hat{n}}) g(\hat{X})&=\sum_{i=1,j=1}^{m}{\phi}^{0}(B_{ij})\otimes {\hat{X}}_i {\hat{X}}_j\neq 0.
    \end{aligned}
\end{equation}
	Now we show that $( \mathbbm{1}_m \otimes {\phi}^{0}) \mathcal{B}$ has a positive eigenvalue. If not, we have
	\[( \mathbbm{1}_m \otimes {\phi}^{0}) \mathcal{B} \preceq 0.\]
	Then $({\phi}^{0}\otimes  \mathbbm{1}_n)g (X) \preceq 0$ for all $ X
	\in  (\mathbb{SR}^{n})^{m}$, which contradicts (\ref{precond}).	
	The condition that $( \mathbbm{1}_m \otimes \phi ^{0}) \mathcal{B}$ has a positive eigenvalue ensures that
\[k(\mathbf{J}(\phi ^{0}))=d >0.\]
  Therefore, we have
  \[\inf\{\Vert M_1-M_2\Vert | M_1\in \mathcal{C},\ M_2 \in \mathcal{D}\}=d>0.\]
   By the separation theorem~\cite[Theorem 11.4]{RTR1970}, there is a   matrix  $M^s\in \mathbb{R}^{(mq)\times (mq)}$, such that
		\[\langle M_1,M^s \rangle \geq a_0 >\langle M_2,M^s \rangle,
		\quad \forall M_1\in \mathcal{C},\ M_2 \in \mathcal{D}. \]
		It is clear that $M^s \succeq 0$ and $a_0=0$. Then we have
\begin{align}\label{sepeq}		
\langle \mathcal{A},M^s \rangle<0 \ ~{\rm and}~ \ \langle ( \mathbbm{1}_m \otimes \phi) \mathcal{B},M^s\rangle \geq 0,
\end{align}
		for every completely positive linear map $\phi: \mathbb{R}^{q \times q} \rightarrow \mathbb{R}^{q \times q}$.

Let $\{e_1, e_2, \ldots, e_q\}$ be the standard orthogonal basis of $\mathbb{R}^q$, and $E=\sum_{i=1}^{q} e_i \otimes e_i$. The matrix $M^s$ can be written in  the following form
\begin{align}\label{Ms}
M^s=\begin{pmatrix}
			M^s_{11}& \cdots & M^s_{1m}\\
			\vdots & \ddots  & \vdots \\
			M^s_{m1}& \cdots & M^s_{mm}\\
		\end{pmatrix},
\end{align}
where each $M^s_{ij} \in \mathbb{R}^{q\times q}$.
The condition  (\ref{sepeq}) can be written in the following form:
\begin{align} \label{Aless0}
	\langle \mathcal{A},M^s \rangle=\sum_{i=1,j=1}^{m} \langle
	A_{ij},M^s_{ij}\rangle=E^T\left(\sum_{i=1,j=1}^{m} A_{ij}\otimes
	M^s_{ij}\right)E<0.
\end{align}
 Moreover, we have
 \begin{align*}
\langle ( \mathbbm{1}_m \otimes \phi) \mathcal{B},M^s \rangle
&=\sum_{i=1,j=1}^{m} \langle \phi (B_{ij}),M^s_{ij}\rangle \nonumber\\
&=\left\langle \sum_{i=1,j=1}^{m} \phi (B_{ij})\otimes
M^s_{ij},\sum_{a=1,b=1}^{q}E_{ab}\otimes E_{ab} \right\rangle \nonumber\\
&=\left\langle \sum_{i=1,j=1}^{m} B_{ij}\otimes
M^s_{ij},\sum_{a=1,b=1}^{q}\phi (E_{ab})\otimes E_{ab} \right\rangle \nonumber\\
& =\left\langle \sum_{i=1,j=1}^{m}  B_{ij}\otimes
M^s_{ij},\mathbf{J}(\phi ) \right\rangle \geq 0,
\end{align*}
{where $E_{ab} \in \mathbb{R}^{q \times q}$ are   matrices  whose
$(a,b)$-th entry is $1$ and all others are $0$. }
 According to Theorem \ref{thm2.2},  the set
\[
	\{\mathbf{J}(\phi )~|~\phi: \mathbb{R}^{q \times q} \rightarrow
	\mathbb{R}^{q \times q} \text{ is completely positive}\}
\]
is equivalent to the positive semidefinite cone in
$\mathbb{SR}^{q^2}$. We have
\begin{align} \label{Blarge0}
	\sum_{i=1,j=1}^{m}  B_{ij}\otimes M^s_{ij} \succeq 0.
\end{align}

 In order to show that the  condition (1) in Theorem \ref{thm1.1} is not
 satisfied,  we need  to translate the inequality conditions
 (\ref{Aless0}) and (\ref{Blarge0}) into the  evaluations of $f$ and
 $g$ at some matrix vector $X\in {(\mathbb{SR}^q)}^{m}$. Since the
 positive semidefinite matrix  $M^s=(M_{ij}^s) \in {(\mathbb{SR}^q)}^{m} $
may not belong to the set \[\mathcal{X}=\{Y Y^T~|~ Y \in {(\mathbb{SR}^q)}^{m}\},\]
which is a strict subset of the positive semidefinite cone
$\mathcal{C} \subset \mathbb{SR}^{mq}$.
 Hence,   we can not  ensure that there  always  exists an $X\in {(\mathbb{SR}^q)}^{m}$ such that
 \[
	f(X)=\sum_{i=1,j=1}^{m}  A_{ij}\otimes
	M^s_{ij}\quad\text{and}\quad   g(X)=\sum_{i=1,j=1}^{m}  B_{ij}\otimes M^s_{ij}.
\]
This is the main reason why we introduce a projection (\ref{projection})  to construct an evaluation point.

Since $M^s$ defined in  (\ref{Ms}) is a positive semidefinite matrix, it has the decomposition
\begin{align*}
M^s&=\sum_{k=1}^{r} v_k v_k^T,\  v_k\in \mathbb{R}^{mq},\
r=\text{\rm{rank}}(M^s),\\
v_k&=\begin{pmatrix}
			v_k^1\\
			\vdots \\
			v_k^{m} \\
		\end{pmatrix},\ v_k^l \in \mathbb{R}^q,\ 1 \leq l \leq m,\
		k=1,\ldots,r.
\end{align*}
	We define $X^M:=(X_1^M,\ldots,X_m^M) \in (\mathbb{R}^{(r+q)\times(r+q)})^m$,	where for each $i=1,\ldots,m,$
\begin{align}\label{constructX}
X^M_i=\begin{pmatrix}
			& & &(v_1^i)^T\\
			&0& &\vdots\\
			& & &(v_{r}^i)^T\\
			& & &\\
			v_1^i&\cdots&v_{r}^i&0
		\end{pmatrix},
\end{align}
 and the projection $P^M: \mathbb{R}^{(r+q)} \to \mathbb{R}^{q}$ to
 the last $q$ coordinates
\begin{align}\label{projection}
 P^M=\begin{pmatrix}
			0& \\
			& {\rm Id}_q\\
		\end{pmatrix}.
\end{align}
Then the condition (\ref{Blarge0}) can be used to show
\[
        \begin{aligned}
        ({\rm Id}_q \otimes P^M)g(X^M)({\rm Id}_q \otimes P^M)&=\sum_{i=1,j=1}^{m}  B_{ij}\otimes P^M {X^M_i}{X^M_j} P^M \\
        &=\sum_{i=1,j=1}^{m}  B_{ij}\otimes M^s_{ij} \succeq 0.
        \end{aligned}
	\]
On the other hand, the condition (\ref{Aless0}) can be used to show
\[
\begin{aligned}		
E^T({\rm Id}_q \otimes P^M)f(X^M)({\rm Id}_q \otimes P^M)E
		&=E^T\left(\sum_{i=1,j=1}^{m}  A_{ij}\otimes P^M {X^M_i}{X^M_j} P^M\right) E \\
&=E^T\left(\sum_{i=1,j=1}^{m}  A_{ij}\otimes M^s_{ij}\right) E<0.
\end{aligned}
\]
Therefore, we have
\[
({\rm Id}_q \otimes P^M)f(X^M)({\rm Id}_q \otimes P^M) \nsucceq 0.
\]
Hence, the condition (1) in Theorem \ref{thm1.1} is false.\qed

{
\begin{corollary}\label{cor5.1} Under the same assumption in Theorem
	\ref{thm1.1}, the statements $(1)$ and $(2)$ in Theorem \ref{thm1.1}
	are also equivalent to the following two conditions:
	\begin{enumerate}[\upshape (1)]\setcounter{enumi}{2}
			\item For all $ X\in (\mathbb{SR}^{n})^m$, orthogonal
				projection matrices $P\in \mathbb{R}^{n\times n}$, $n \in \mathbb{N}^{+}$, if $({\rm Id}_{q} \otimes
				P)g(X)({\rm Id}_{q} \otimes P) \succeq 0$, then $({\rm
				Id}_{q} \otimes P)f(X)({\rm Id}_{q} \otimes P) \succeq
			0$.
	\item  For all $ X\in (\mathbb{SR}^{n})^m$, $Q\in \mathbb{R}^{n\times
				\ell}$, $\ell, n \in \mathbb{N}^{+}$, if $({\rm Id}_{q} \otimes
				Q^T)g(X)({\rm Id}_{q} \otimes Q) \succeq 0$, then $({\rm
				Id}_{q} \otimes Q^T)f(X)({\rm Id}_{q} \otimes Q) \succeq
			0$.
			
		\end{enumerate}
\end{corollary}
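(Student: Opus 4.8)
The plan is to prove the cycle of implications $(2)\Rightarrow(4)\Rightarrow(3)\Rightarrow(1)$ and then invoke the equivalence $(1)\Leftrightarrow(2)$ already contained in Theorem~\ref{thm1.1}; this puts all four statements on an equal footing. Of the three new implications only $(2)\Rightarrow(4)$ carries any substance, and it is nothing more than the ``easy'' half of the proof of Theorem~\ref{thm1.1} (the part establishing $(2)\Rightarrow(1)$) run again with the coordinate projection ${\rm Id}_q\otimes P$ replaced throughout by ${\rm Id}_q\otimes Q$ for an arbitrary $Q\in\mathbb{R}^{n\times\ell}$.

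For $(2)\Rightarrow(4)$ I would take the nonzero completely positive map $\phi:\mathbb{R}^{q\times q}\to\mathbb{R}^{q\times q}$ supplied by $(2)$, fix $X\in(\mathbb{SR}^n)^m$ and $Q\in\mathbb{R}^{n\times\ell}$, and multiply the inequality $f(X)-(\phi\otimes\mathbbm{1}_n)g(X)\succeq0$ on the left by ${\rm Id}_q\otimes Q^T$ and on the right by ${\rm Id}_q\otimes Q$ (a congruence, so positive semidefiniteness is preserved). What makes the argument go through is the commutation rule
\[
({\rm Id}_q\otimes Q^T)\bigl((\phi\otimes\mathbbm{1}_n)(Z)\bigr)({\rm Id}_q\otimes Q)=(\phi\otimes\mathbbm{1}_\ell)\bigl(({\rm Id}_q\otimes Q^T)\,Z\,({\rm Id}_q\otimes Q)\bigr),
\]
valid for every $Z\in\mathbb{R}^{qn\times qn}$, which I would verify on rank-one tensors $Z=M\otimes N$ (both sides equalling $\phi(M)\otimes(Q^TNQ)$) and extend by bilinearity. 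Applying it with $Z=g(X)$ yields
\[
({\rm Id}_q\otimes Q^T)f(X)({\rm Id}_q\otimes Q)\ \succeq\ (\phi\otimes\mathbbm{1}_\ell)\bigl(({\rm Id}_q\otimes Q^T)g(X)({\rm Id}_q\otimes Q)\bigr),
\]
and since $\phi$ is completely positive, $\phi\otimes\mathbbm{1}_\ell$ is a positive map, so $({\rm Id}_q\otimes Q^T)g(X)({\rm Id}_q\otimes Q)\succeq0$ forces the right-hand side, hence the left-hand side, to be positive semidefinite. That is exactly statement $(4)$.

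The remaining two implications are bookkeeping. For $(4)\Rightarrow(3)$ I would take $\ell=n$ and $Q=P$ with $P$ an orthogonal projection matrix; since $P$ is symmetric, the hypothesis and conclusion of $(4)$ become precisely those of $(3)$. For $(3)\Rightarrow(1)$ I would specialize $P$ (for $n>q$) to the orthogonal projection of $\mathbb{R}^n$ onto its last $q$ coordinates: statement $(1)$ of Theorem~\ref{thm1.1} phrases the condition through the non-square coordinate map $\mathbb{R}^n\to\mathbb{R}^q$, but $({\rm Id}_q\otimes P)Z({\rm Id}_q\otimes P)\succeq0$ (a $(qn)\times(qn)$ condition) is equivalent to positive semidefiniteness of the corresponding $q^2\times q^2$ compression, because a symmetric matrix is positive semidefinite iff its compression to any coordinate subspace is; applying this to $Z=f(X)$ and $Z=g(X)$ identifies this instance of $(3)$ with condition $(1)$. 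Combined with $(1)\Rightarrow(2)$ from Theorem~\ref{thm1.1}, the cycle closes and all four statements are equivalent.

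The only step I expect to require care is the commutation rule in $(2)\Rightarrow(4)$: because $Q$ need not be square, one must keep track of the index change — conjugation by ${\rm Id}_q\otimes Q$ turns $\phi\otimes\mathbbm{1}_n$ into $\phi\otimes\mathbbm{1}_\ell$ — and use that the congruence $N\mapsto Q^TNQ$ maps $\mathbb{SR}^n$ into $\mathbb{SR}^\ell$ while preserving positive semidefiniteness. Everything else is a direct specialization of results already in hand.
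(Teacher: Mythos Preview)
Your proposal is correct and follows essentially the same route as the paper: the paper's proof is the two-line observation that $(2)\Rightarrow(4)$ is a rerun of the $(2)\Rightarrow(1)$ direction of Theorem~\ref{thm1.1} with the coordinate projection replaced by an arbitrary $Q$, and that $(4)\Rightarrow(3)\Rightarrow(1)$ are obvious specializations. You have supplied the details the paper leaves implicit---in particular the commutation identity $({\rm Id}_q\otimes Q^T)(\phi\otimes\mathbbm{1}_n)(Z)({\rm Id}_q\otimes Q)=(\phi\otimes\mathbbm{1}_\ell)\bigl(({\rm Id}_q\otimes Q^T)Z({\rm Id}_q\otimes Q)\bigr)$ and the passage between the non-square and square readings of the projection in $(3)\Rightarrow(1)$---but there is no substantive difference in strategy.
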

\begin{proof}
	From the proof of $(2) \Rightarrow (1)$, we can see that $(2)
	\Rightarrow (4)$ also holds. The implications $(4) \Rightarrow (3)
	\Rightarrow (1)$ are obvious.
\end{proof}}

\begin{remark}
    Theorem \ref{thm1.1} is still true when the dimension $q_f$ of
	the coefficients of the  polynomial $f$  is smaller than the
	dimension $q_g$ of the coefficients of the polynomial $ g$.
	In fact, we can always add zeros to the coefficients of $f$ to
	make $q_f=q_g$. Consider the case when
	$q_f>q_g$. Suppose that $k$ is the
	smallest  positive integer satisfying  $q_f \leq k q_g$.
	Define a new polynomial $\tilde g=\oplus ^{k}g$.
	Then Theorem \ref{thm1.1} is still valid after replacing $g$ by $\tilde g$.
\end{remark}
	
	\section{Other variants of S-lemma in noncommutative cases}

	\subsection{Other variant conditions of S-lemma}
	Different from commutative polynomials, there are
	many ways to extend the S-lemma for (matrix-valued) polynomials with
	matrix evaluations. Comparing with the condition (1) in Theorem \ref{thm1.1}, we consider the following condition which is a more direct extension
	of the classical S-lemma:
	\begin{enumerate}[\upshape (1)]
		\item[$(1')$] For all $ X\in (\mathbb{SR}^{n })^m$, $n \in
			\mathbb{N}^{+}$, if $g(X) \succeq 0$, then $f(X) \succeq 0$.
	\end{enumerate}

\begin{remark}
It is straightforward to verify that  the  condition (2) in Theorem \ref{thm1.1} implies $(1')$.
	Therefore, under the assumption that  there is an $\hat{X} \in
(\mathbb{R}^{\hat{n} \times \hat{n}})^m$ for some $\hat{n} \in
\mathbb{N}^{+}$, such that $g(\hat{X}) \succ 0$,
  the
 condition (1) in Theorem \ref{thm1.1} implies the condition  $(1')$,
 but it is unknown
 if it is true the other way around.

 As illustrated by the following example, without the
 assumption of existing an $\hat{X}$ such that $g(\hat{X}) \succ 0$, $(1')$ can not imply the
 condition (1) in Theorem \ref{thm1.1}.
 \end{remark}

	\begin{example}\label{example6.1} We construct two matrix-valued polynomials
		\[f=\begin{pmatrix}
			x_1 x_2 +x_2 x_1 -x_2 x_2&0\\
			0&0
		\end{pmatrix} \oplus \begin{pmatrix}
			0&0\\
			0&0
		\end{pmatrix},\]
		\[g=\begin{pmatrix}
			x_1 x_1-x_2x_2&0\\
			0&x_1x_2+x_2x_1
		\end{pmatrix} \oplus \begin{pmatrix}
			0&x_1x_2-x_2x_1\\
			x_2x_1-x_1x_2&0
		\end{pmatrix}. \]
	\end{example}
	For any $ X\in (\mathbb{SR}^{n})^2$, if $g(X)\succeq 0$, we have $X_1X_2-X_2X_1=0$. So $X_1,X_2$ have the same eigenspaces. Let $X=(X_1, X_2)$, where
 \[X_1=\sum_{i=1}^{r}\lambda_i v_i v_i^T, ~X_2=\sum_{i=1}^{r}\mu_i v_i v_i^T.\]
 Assume that  $g(X)\succeq 0$,  then we have
 \[(\lambda_i)^2-(\mu_i)^2 \geq 0 ~{\rm and}~\lambda_i \mu_i \geq 0.\]
  It is easy to  check that  $f(X) \succeq 0$. Hence $f$ and $g$ satisfy the condition $(1')$.

{
  On the other hand,  let
	\[X^0_1=\begin{pmatrix}
		0&0&0\\
		0&0&\sqrt{2}\\
		0&\sqrt{2}&0
	\end{pmatrix} \oplus \begin{pmatrix}
	0&0&0\\
	0&0&0\\
	0&0&0
\end{pmatrix}
, \ X^0_2=\begin{pmatrix}
		0&0&1\\
		0&0&0\\
		1&0&0
	\end{pmatrix} \oplus \begin{pmatrix}
	0&0&0\\
	0&0&0\\
	0&0&0
\end{pmatrix}
,\] \[P=\begin{pmatrix}
		0&0\\
		0&0
	\end{pmatrix} \oplus {\rm Id_4}.\]
It is straightforward to verify that
\begin{align*}
	({\rm Id}_4\otimes P)g(X^0)({\rm Id}_4\otimes P)&=\begin{pmatrix}
		P(X^0_1 X^0_1-X^0_2X^0_2)P&0\\
		0&P(X^0_1X^0_2+X^0_2X^0_1)P
	\end{pmatrix} \\
  	&\oplus \begin{pmatrix}
	0&P(X^0_1X^0_2-X^0_2X^0_1)P\\
	P(X^0_2X^0_1-X^0_1X^0_2)P&0
\end{pmatrix}.
\end{align*}
 The top left corner matrix is positive semidefinite
\[P(X^0_1 X^0_1-X^0_2X^0_2)P=\begin{pmatrix}
0&0&0&0&0&0\\
0&0&0&0&0&0\\
0&0&1&0&0&0\\
0&0&0&0&0&0\\
0&0&0&0&0&0\\
0&0&0&0&0&0
\end{pmatrix}
 \succeq 0.
\]
The other submatrices  are all zero matrices
\[P(X^0_1X^0_2+X^0_2X^0_1)P=\pm P(X^0_1X^0_2-X^0_2X^0_1)P=0.\]
Therefore, we have
\[({\rm Id}_4\otimes P)g(X^0)({\rm Id}_4\otimes P) \succeq 0.\]
However, we have
\begin{align*}
({\rm Id}_4\otimes P)f(X^0)({\rm Id}_4\otimes P)
	&=\begin{pmatrix}
		P (X_1^0 X^0_2 +X^0_2 X^0_1 -X^0_2 X^0_2) P&0\\
		0&0
	\end{pmatrix} \oplus \begin{pmatrix}
		0&0\\
		0&0
	\end{pmatrix}.
\end{align*}
 The top left corner matrix is negative  semidefinite
 \[P (X_1^0 X^0_2 +X^0_2 X^0_1 -X^0_2 X^0_2) P=\begin{pmatrix}
0&0&0&0&0&0\\
0&0&0&0&0&0\\
0&0&-1&0&0&0\\
0&0&0&0&0&0\\
0&0&0&0&0&0\\
0&0&0&0&0&0
\end{pmatrix} \preceq 0.\]
 Therefore, we have
\[({\rm Id}_4\otimes P)f(X^0)({\rm Id}_4\otimes P) \preceq 0.\]
Therefore,  the condition (1) in Theorem \ref{thm1.1}  is false for
the given $f$ and $g$.\qed
}

{ With the assumption that there is an $\hat{X} \in
 (\mathbb{SR}^{\hat{n}})^m$ for some $\hat{n} \in \mathbb{N}^{+}$,
 such that $g(\hat{X}) \succ 0$, whether or not $(1')$ can imply the
 condition (1) in Theorem \ref{thm1.1} is an interesting problem and
 we wish to investigate it in future.
}

	Furthermore, one can also consider the following  condition:

	\begin{enumerate}[\upshape (1)]
		\item[$(1'')$] For all $ X\in (\mathbb{SR}^{n })^m$, $n \in \mathbb{N}^{+}$, given a vector  $v\in \mathbb{R}^{qn}$, if $v^{T}g(X)v \geq 0$ then $v^{T}f(X)v \geq 0$.
	\end{enumerate}

The following example shows that  the condition $(1'')$ is strictly
stronger than the condition (1) in Theorem \ref{thm1.1}  and the condition $(1')$.
\begin{example}\label{eg6.2} We are given   the   matrix-valued polynomials
		\[f=\begin{pmatrix}
			x_1x_1&0\\
			0&x_1x_1-x_2x_2
		\end{pmatrix},\]
and
		\[g=\begin{pmatrix}
			x_1x_1-x_2x_2&0\\
			0&x_1x_1
		\end{pmatrix}. \]
\end{example}
Let us define a   linear map $\phi_2$ from $\mathbb{R}^{2 \times 2}$ to $\mathbb{R}^{2 \times 2}$
\[\phi_2:~\begin{pmatrix}
	a&b\\
	c&d
\end{pmatrix}\rightarrow
\begin{pmatrix}
	d&0\\
	0&a
\end{pmatrix}.\]
It is easy to verify that $\phi_2$ is a completely positive linear map.
We have
\[f(X)-(\phi_2 g)(X)=0  ~{\rm  for ~all}~ X\in \mathbb{SR}^n,\ n\in \mathbb{N}^{+}.\]
The condition (2) in Theorem \ref{thm1.1}  is satisfied. Therefore, the condition (1) in  Theorem \ref{thm1.1}  and the condition $(1')$ are satisfied too.
However, let  \[X^0=[1,2]^T, ~v=[0,1]^T,\] we have
\[v^T g(X^0) v=1 >0, ~{\rm but}~ ~~v^T f(X^0)v=-3 <0.\]
Therefore the condition  $(1'')$ above is not satisfied.\qed

\subsection{Proof of Theorem \ref{thm1.3}}
	We assume that $f(x)$ and $g(x)$ are homogeneous matrix-valued polynomials with following form
	\[
f(x)=\sum_{i=1,j=1}^{m}A_{ij} x_i x_j^T, ~g(x)
=\sum_{i=1,j=1}^{m}B_{ij} x_i x_j^T,
\]
where $A_{i,j}, B_{i,j}\in\mathbb{R}^{q\times q}$ and
$A_{i,j}=A_{j,i}^T$, $B_{i,j}=B_{j,i}^T$ for all $i, j$.
Now we prove Theorem \ref{thm1.3} which implies that the condition
$(1)$ in Theorem \ref{thm1.1} can be simplified to $(1')$, i.e.,
we do not need projection for the matrix-valued hereditary
polynomials.	

Define that
\[\mathcal{A}=\begin{pmatrix}
	A_{11}&\cdots&A_{1m}\\
	\vdots&\ddots&\vdots\\
	A_{m1}&\cdots&A_{mm}\\
\end{pmatrix}\quad\text{and}\quad
\mathcal{B}=\begin{pmatrix}
	B_{11}&\cdots&B_{1m}\\
	\vdots&\ddots&\vdots\\
	B_{m1}&\cdots&B_{mm}\\
\end{pmatrix}.\]
Let $\mathcal{A}^{'}$ be defined as in (\ref{A'}) and $\psi_{f}:\mathbb{R}^{m
\times m} \rightarrow \mathbb{R}^{q \times q}$ be the linear map
defined by $\mathcal{A}^{'}$  (\ref{phif}). Then, it holds that
\[f(X)=\psi_{f} \otimes  \mathbbm{1}_n\begin{pmatrix}
	X_{1}{X_{1}}^{T} & \cdots & X_{1}{X_{m}}^{T}\\
	\vdots & \ddots &\vdots \\
	X_{m}{X_{1}}^{T} & \cdots &X_{m}{X_{m}}^{T}
\end{pmatrix}=\psi_{f} \otimes  \mathbbm{1}_n (XX^T),\]
for all  $X\in (\mathbb{R}^{n \times n})^{m}$, $n\in \mathbb{N}^+$.
Similarly,
let $\mathcal{B}^{'}$ be the rearrangement of $\mathcal{B}$ and
$\psi_{g}:\mathbb{R}^{m \times m} \rightarrow \mathbb{R}^{q \times q}$
be the linear map defined by $\mathcal{B}^{'}$ such that
  \[ g(X)=\psi_{g} \otimes  \mathbbm{1}_n (XX^T),\]
  for all  $X\in (\mathbb{R}^{n \times n})^{m}$, $n\in \mathbb{N}^+$.

\vskip 3pt
  \noindent {\itshape Proof of Theorem \ref{thm1.3}}\
	 The implication $(2) \Rightarrow (1)$ is obvious.
	
 Assume that the condition (2) in Theorem \ref{thm1.3} is false.  Similar
 to the discussion in  the proof of Theorem \ref{thm1.1},   we can find a
 separation matrix $M^s \succeq 0$ which satisfies the condition
 (\ref{sepeq})
and has the following decomposition:
\begin{align*}
M^s&=\sum_{k=1}^{r} v_k v_k^T,\  v_k\in \mathbb{R}^{mq},\
r=\text{\rm{rank}}(M^s),\\
v_k&=\begin{pmatrix}
			v_k^1\\
			\vdots \\
			v_k^{m} \\
		\end{pmatrix},\ v_k^l \in \mathbb{R}^q,\ 1 \leq l \leq m,\
		k=1,\ldots,r.
\end{align*}

  Since we do not require the variable $X^M_i$ to be symmetric, instead of constructing
 $X^M_i$ as in (\ref{constructX}), we let
	\[X^M_i=\begin{pmatrix}
		v^i_1&\cdots &v^i_{r}
	\end{pmatrix}.\]
 Letting $n=\max\{r,q\}$, we add zero rows or columns into $X_i^M \in
 \mathbb{R}^{q\times r}$ to make it a square matrix in
 $\mathbb{R}^{n\times n}$. Without loss of generality, we assume that
 $r>q$, and define   new matrices $\tilde X^M_i \in
 \mathbb{R}^{n\times n}$ for $i=1,\ldots,m$,
  \[\tilde X^M_i=\begin{pmatrix}
		v^i_1&\cdots &v^i_{r} \\
         0 & \cdots &0 \\
         \vdots &\ddots & \vdots \\
          0 & \cdots &0
	\end{pmatrix}. \]
 Let  $\tilde X^M:=(\tilde X_1^M,\ldots,\tilde X_m^M) \in (\mathbb{R}^{n \times n})^m$.
  We can  translate the inequality conditions  (\ref{Aless0}) and (\ref{Blarge0}) into the  evaluations of $f$ and $g$ at  $\tilde X^M \in {(\mathbb{R}^{n\times n})}^{m}$. In particular, we have
  \begin{align*}
    g(\tilde X^M)=\sum_{i=1,j=1}^{m}  B_{ij}\otimes \begin{pmatrix}
    	 M^s_{ij}&0\\
    	 0&0
    \end{pmatrix} \succeq 0.
\end{align*}
Let $\{e_1, e_2, \ldots, e_q\}$ be the standard orthogonal basis of $\mathbb{R}^q$,  $\{f_1, f_2, \ldots, f_n\}$ be the standard orthogonal basis of $\mathbb{R}^n$, and $E'=\sum_{i=1}^{q} e_i \otimes f_i$, we have
\begin{align*}
    E'^T f(\tilde X^M){E'}=E'^T\left(\sum_{i=1,j=1}^{m}  A_{ij}\otimes \begin{pmatrix}
    	M^s_{ij}&0\\
    	0&0
    \end{pmatrix} \right){E'} < 0.
     \end{align*}
  Therefore,   the condition (1) in Theorem \ref{thm1.3} is false.
  \qed

\subsection*{Some discussions}
{
In this paper, we show several variants of the S-lemma in
noncommutative cases for quadratic homogeneous  polynomials.  Unlike
the commutative case, the S-lemma for general quadratic nonhomogeneous
polynomials in noncommutative case is still unknown.

In the commutative case, it is straightforward to convert a nonhomogeneous
polynomial to a homogeneous one by introducing a new variable.
For example, let
\[
f(x)=\sum_{i=1,j=1}^{m} a_{ij}x_i x_j + \sum_{i=1}^{m} a_i x_i +a_0,
\]
where $a_{ij}=a_{ji}, a_i, a_0\in\mathbb{R}$ for all $i, j$.
By introducing a new variable $x_0$, the homogenization of $f(x)$
can be written in the following form:
\begin{align*}
\tilde f(x_0,x)=\sum_{i=1,j=1}^{m} a_{ij}x_i x_j + \sum_{i=1}^{m} a_i
x_i x_0 +a_0 x_0^2.
\end{align*}
Then we have
\[
	\tilde f(X_0, X)=X_0^2 f(X/X_0),  \quad\text{for all}\quad X \in
	\mathbb{R}^{m},\ X_0 \neq 0  \in \mathbb{R}.
\]
Using this fact, the proof of the classical S-lemma for commutative
nonhomogeneous polynomials can be reduced to homogeneous ones (see
\cite{VA1971}).

However, this process becomes more complicated in the noncommutative
cases. First of all, due to the noncommutativity of variables, the
homogenization of a noncommutative polynomial is not unique. For
example, consider a nonhomogeneous  quadratic matrix-valued
polynomial
\[
	f(x)=\sum_{i=1,j=1}^{m} A_{ij}x_i x_j +
	\sum_{i=1}^{m} A_i x_i +A_0,
\]
where $A_{ij}=A_{j,i}^T, A_i, A_0\in\mathbb{R}^{q\times q}$ for all
$i,j$.
By introducing a new variable $x_0$, we homogenize $f$ to
\begin{align*}
	h(x_0,x)=\sum_{i=1,j=1}^{m} A_{ij} x_i x_j+ \sum_{i=1}^{m} H_{i0} x_i x_0+ \sum_{i=1}^{m} H_{0i} x_0 x_i+A_0 x_0 x_0,
\end{align*}
where
\begin{align}\label{Hi0}
	~H_{i0}+H_{0i}=A_{i},  \quad \text{for all} \quad i=1,\cdots,m.
 \end{align}
There exist  different choices of $H_{i0}$ and $H_{0i}$
satisfying \eqref{Hi0} for $ 1 \leq i \leq m$. Therefore, the homogenization of a quadratic
nonhomogeneous noncommutative polynomial is not unique.

\begin{example}
For the quadratic nonhomogeneous noncommutative polynomial	
\[
		f(x)=\begin{pmatrix}
			x^2&x\\
			x& 1
		\end{pmatrix},
\]
we have two different choices of homogenization:
	\begin{align*}
	    h_1(x_0, x)=\begin{pmatrix}
	    	x^2&xx_0\\
	    	x_0x&x_0^2
		\end{pmatrix} \quad\text{and}\quad
        h_2(x_0,x)=\begin{pmatrix}
        	x^2&x_0x\\
        	xx_0&x_0^2
        \end{pmatrix}.
	\end{align*}
For all $X\in \mathbb{SR}^{n}$, $n\in \mathbb{N}^+$, it holds that
\begin{align*}
	f(X)=h_1({\rm Id_n},X)=h_2({\rm Id_n}, X).
\end{align*}
The   coefficient matrices of $h_1(X_0, X)$ and $h_2(X_0, X)$ satisfy the following conditions:
\[ \left( \begin {array}{cccc} 0&0&0&0\\ \noalign{\medskip}0&1&1&0
\\ \noalign{\medskip}0&1&1&0\\ \noalign{\medskip}0&0&0&0\end {array}\right)
\succeq 0, ~~~~~\, \left( \begin {array}{cccc} 0&0&0&1\\ \noalign{\medskip}0&1&0&0
\\ \noalign{\medskip}0&0&1&0\\ \noalign{\medskip}1&0&0&0\end {array}
 \right) \not\succeq  0.\]
By Theorem \ref{thm4.1}, we know that $h_1(X_0, X)$ is positive
semidefinite for all $X_0, X\in\mathbb{SR}^n$, $n\in\mathbb{N}^+$, while   $h_2(X_0, X)$ is not positive
semidefinite for all $X_0, X\in\mathbb{SR}^n$, $n\in\mathbb{N}^+$.

\qed
\end{example}
Nevertheless, given a positive semidefinite polynomial $f(x)$, there
always exists a choice of $H_{i0}$ and $H_{0i}$ satisfying
(\ref{Hi0}), such that
the homogenization $h(x)$ is positive semidefinite. In fact,
according to \cite[Theorem  0.2]{MC2001}, if a quadratic polynomial
$f(X)$ is  positive semidefinite for all $X\in (\mathbb{SR}^{n})^m$,
$n\in\mathbb{N}^+$,
then there exists a matrix-valued linear polynomial  $U(x)$, whose
coefficients belong to $\mathbb{R}^{(q (m+1)\times (q (m+1))}$, such
that $f(x)=U(x)^{T}U(x)$. Hence, we can let  $h(x_0, x)=\widetilde
U(x_0,x)^T \widetilde U(x_0,x)$
where $\widetilde U(x_0,x)$ is obtained by homogenizing  $U(x)$. It is
clear that $h(X)$  is  positive semidefinite  for all $X\in
(\mathbb{SR}^{n})^{m+1}$, $n\in\mathbb{N}^+$.  Thanks to
Theorem \ref{thm4.1}, one can find
such a homogenization $h(x)$ by solving a semidefinite
program with the positive semidefinite constraint of the coefficient matrice of
$h$ and the equality constraint \eqref{Hi0}.

However, unlike the commutative case proved in \cite{VA1971}, it is
unclear how to derive S-lemma for nonhomogeneous quadratic polynomials
from homogeneous ones. In particular, for a general nonhomogeneous
quadratic polynomial $f$ and its homogenization $h$, we have
\[
  h(X_0, X) \neq X_0 f(X_0^{-\frac{1}{2}}X X_0^{-\frac{1}{2}}) X_0,
  \quad X\in (\mathbb{SR}^{n})^m,\ X_0 \in \mathbb{SR}^{n} ~{\rm is ~ invertible}.
  \]
Thus, the S-lemma for general quadratic nonhomogeneous
polynomials in noncommutative cases is still unknown and left for
future research.

 }

\vspace{15pt}
\noindent{\bfseries Acknowledgments:}
 We would also like to acknowledge many valuable  comments and suggestions from  Ke Ye and  Jianting Yang.

\bibliographystyle{amsplain}
	\bibliography{yan}

\end{document}